\theoremstyle{theorem}
\newtheorem{thm}{Theorem}
\theoremstyle{theorem}
\newtheorem{lem}[thm]{Lemma}
\theoremstyle{theorem}
\newtheorem{cor}[thm]{Corollary}
\theoremstyle{remark}
\newtheorem{rmk}{Remark}
\theoremstyle{remark}
\newtheorem{ex}{Example}
\begin{document}

\title[On period polynomials of degree $2^m$]{\boldmath On period polynomials of degree $2^m$  and weight distributions of certain irreducible cyclic codes}

\author{Ioulia N. Baoulina}

\address{Department of Mathematics, Moscow State Pedagogical University,\\ Krasnoprudnaya str. 14, Moscow 107140, Russia}

\email{jbaulina@mail.ru}

\date{}

\maketitle

\begin{abstract}
We explicitly determine the values of reduced cyclotomic periods of order $2^m$, $m\ge 4$, for finite fields of characteristic $p\equiv 3\text{\;or\;}5\pmod{8}$. These evaluations are applied to obtain explicit factorizations of the corresponding reduced period polynomials. As another application, the weight distributions of certain irreducible cyclic codes are described.
\end{abstract}

\keywords{{\it Keywords}: Cyclotomic period; $f$-nomial Gaussian period; period polynomial; reduced period polynomial; factorization; irreducible cyclic code; weight distribution.}

\subjclass{Mathematics Subject Classification 2010: 11L05, 11T22, 11T24, 94B15}

\thispagestyle{empty}

\section{Introduction}
Let $\mathbb F_q$ be a finite field of characteristic~$p$ with $q=p^s$ elements, $\mathbb F_q^*=\mathbb F_q^{}\setminus\{0\}$, and let $\gamma$ be a fixed generator of the cyclic group $\mathbb F_q^*$ . By ${\mathop{\rm Tr}\nolimits}:\mathbb F_q\rightarrow\mathbb F_p$ we denote the trace mapping, that is,  ${\mathop{\rm
Tr}\nolimits}(x)=x+x^p+x^{p^2}+\dots+x^{p^{s-1}}$ for $x\in\mathbb F_q$. Let $e$ and $f$ be positive integers such that $q=ef+1$. Denote by $\EuScript{H}$ the subgroup of $e$-th powers in $\mathbb F_q^*$.  For any positive integer $n$, write $\zeta_n=\exp(2\pi i/n)$.

The \emph{cyclotomic} (or \emph{$f$-nomial Gaussian}) \emph{periods} of order $e$ for $\mathbb F_q$ with respect to $\gamma$  are defined by
$$
\eta_j=\sum_{x\in\gamma^j\EuScript{H}}\zeta_p^{{\mathop{\rm Tr}\nolimits}(x)}=\sum_{h=0}^{f-1}\zeta_p^{{\mathop{\rm Tr}\nolimits}(\gamma^{eh+j})},\quad j=0,1,\dots,e-1.
$$
The \emph{reduced cyclotomic} (or \emph{reduced $f$-nomial Gaussian}) \emph{periods} of order $e$ for $\mathbb F_q$ with respect to $\gamma$ are defined by
$$
\eta_j^*=\sum_{x\in\mathbb F_q}\zeta_p^{{\mathop{\rm Tr}\nolimits}(\gamma^j x^e)}=1+e\eta_j,\quad j=0,1,\dots,e-1.
$$
The \emph{period polynomial} of degree $e$ for $\mathbb F_q$ is the polynomial
$$
P_e(X)=\prod_{j=0}^{e-1}(X-\eta_j),
$$
and the \emph{reduced period polynomial} of degree $e$ for $\mathbb F_q$ is
$$
P_e^*(X)=\prod_{j=0}^{e-1}(X-\eta_j^*).
$$
The polynomials $P_e(X)$ and $P_e^*(X)$ have integer coefficients and  are independent of the choice of generator~$\gamma$. They are irreducible over the rationals when $s=1,$  but not necessarily irreducible when $s>1$. More precisely, $P_e(X)$ and $P_e^*(X)$ split over the rationals into $\delta=\gcd(e,(q-1)/(p-1))$ factors of degree~$e/\delta$ (not necessarily distinct), and each of these factors is irreducible or a power of an irreducible polynomial. Furthermore, the polynomials $P_e(X)$ and $P_e^*(X)$ are irreducible over the rationals if and only if $\gcd(e,(q-1)/(p-1))=1$. For proofs of these facts, see~\cite{M}.

In the case $s=1$, the period polynomials were determined explicitly by Gauss for ${e\in\{2, 3, 4\}}$  and by many others for certain small values of~$e$. In the general case, Myerson~\cite{M} derived the explicit formulas for $P_e(X)$ and $P_e^*(X)$ when $e\in\{2,3,4\}$, and also found their factorizations into irreducible polynomials over the rationals. Gurak~\cite{G3} obtained similar results for $e\in\{6,8,12,24\}$; see also \cite{G2} for the case $s=2$, $e\in\{6,8,12\}$. Hoshi~\cite{H} considered the case $e=5$. Note that if $-1$ is a power of $p$ modulo $e$, then the period polynomials can also be easily obtained. Indeed, if $e>2$ and $e\mid(p^v+1)$, with $v$ chosen minimal, then $2v\mid s$, and \cite[Proposition~20]{M} yields
$$
P_e^*(X)=(X+(-1)^{s/2v}(e-1)q^{1/2})(X-(-1)^{s/2v}q^{1/2})^{e-1}.
$$
Baumert and Mykkeltveit~\cite{BM} found the values of cyclotomic periods in the case when $e>3$ is a prime, $e\equiv 3\pmod{4}$  and $p$ generates the quadratic residues modulo~$e$; see also \cite[Proposition~21]{M}.

It is seen immediately from the definitions that $P_e^*(eX+1)=e^e P_e(X)$, and so it suffices to factorize only $P_e^*(X)$.

The aim of this paper is to find the values of reduced cyclotomic periods of order $2^m$, $m\ge 4$, for finite fields of characteristic $p\equiv 3\text{\;or\;}5\pmod{8}$ and obtain explicit factorizations of the corresponding reduced period polynomials. The traditional approach to cyclotomic periods is to express them in terms of Gauss sums and to apply known
results about these sums. Instead, we observe that the values of reduced cyclotomic periods of order $2^m$
have already appeared implicitly in our recent  paper \cite{B}, and so they can easily be deduced from our results on diagonal equations. The main result in Section~\ref{s3} is Theorem~\ref{t1}, which gives the explicit factorization of $P_{2^m}^*(X)$ in the case $p\equiv 3\pmod{8}$. Our main result of Section~\ref{s4} is Theorem~\ref{t2}, in which we treat the case  $p\equiv 5\pmod{8}$. In Section~\ref{s5}, we apply the results of previous sections to describe the weight distributions of certain irreducible cyclic codes. All the evaluations in Sections~\ref{s3}--\ref{s5} are effected in terms of parameters occurring in quadratic partitions of some powers of~$p$.

\section{Preliminary lemmas}
\label{s2}

We denote by $N[x_1^e+\dots+x_n^e=0]$ the number of solutions to the equation $x_1^e+\dots+x_n^e=0$ in $\mathbb F_q^n$.

\begin{lem}
\label{l1}
We have
$$
N[x_1^e+\dots+x_n^e=0]=q^{n-1}+\frac{q-1}{eq}\sum_{j=0}^{e-1}(\eta_j^*)^n.
$$
\end{lem}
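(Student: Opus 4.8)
The plan is to count solutions to $x_1^e + \dots + x_n^e = 0$ by expressing the indicator of the equation via additive characters, then recognizing the resulting exponential sums as powers of the reduced periods.

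First I would use the standard orthogonality relation: for any $y \in \mathbb{F}_q$,
$$
\frac{1}{q}\sum_{t \in \mathbb{F}_q} \zeta_p^{{\mathop{\rm Tr}\nolimits}(ty)} = \begin{cases} 1 & \text{if } y = 0,\\ 0 & \text{if } y \ne 0.\end{cases}
$$
Applying this with $y = x_1^e + \dots + x_n^e$ gives
$$
N[x_1^e+\dots+x_n^e=0] = \frac{1}{q}\sum_{t\in\mathbb F_q}\sum_{x_1,\dots,x_n\in\mathbb F_q}\zeta_p^{{\mathop{\rm Tr}\nolimits}(t(x_1^e+\dots+x_n^e))} = \frac{1}{q}\sum_{t\in\mathbb F_q}\left(\sum_{x\in\mathbb F_q}\zeta_p^{{\mathop{\rm Tr}\nolimits}(tx^e)}\right)^{\!n},
$$
where the factorization across the $x_i$ is because the summand is multiplicative in the blocks. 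The $t=0$ term contributes $q^{n-1}$.

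Next I would handle the inner sum $S(t) = \sum_{x\in\mathbb F_q}\zeta_p^{{\mathop{\rm Tr}\nolimits}(tx^e)}$ for $t \ne 0$. Writing $t = \gamma^k$ and noting that as $x$ ranges over $\mathbb{F}_q$, the map $x \mapsto tx^e$ depends on $t$ only through the coset $t\EuScript{H}$ of $e$-th powers (since replacing $t$ by $t h^e$ and $x$ by $x h^{-1}$ leaves $tx^e$ unchanged), we see $S(\gamma^k)$ depends only on $k \bmod e$. Comparing with the definition $\eta_j^* = \sum_{x\in\mathbb F_q}\zeta_p^{{\mathop{\rm Tr}\nolimits}(\gamma^j x^e)}$, we get $S(\gamma^k) = \eta_{k \bmod e}^*$. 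Each value $j \in \{0,\dots,e-1\}$ is hit by exactly $(q-1)/e$ values of $t \in \mathbb{F}_q^*$ (those in the coset $\gamma^j\EuScript{H}$). Hence
$$
N[x_1^e+\dots+x_n^e=0] = q^{n-1} + \frac{1}{q}\sum_{j=0}^{e-1}\frac{q-1}{e}(\eta_j^*)^n = q^{n-1} + \frac{q-1}{eq}\sum_{j=0}^{e-1}(\eta_j^*)^n,
$$
which is the claimed formula.

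There is no serious obstacle here; the only point requiring a little care is the bookkeeping that shows $S(t)$ is constant on each coset of $\EuScript{H}$ and that each coset has size $(q-1)/e = f$, which is immediate from $q = ef+1$ and the fact that $\EuScript{H}$ is the group of $e$-th powers of index $e$ in $\mathbb{F}_q^*$. One should also double-check the $t = 0$ term: there $S(0) = q$, contributing $q^n/q = q^{n-1}$, consistent with the formula. This lemma is essentially the classical translation between diagonal-equation counts and Gaussian periods, and the proof is a direct computation once the character-sum setup is in place.
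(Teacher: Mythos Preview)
Your argument is correct: the additive-character orthogonality relation, the factorization across the $x_i$, the observation that $S(t)$ depends only on the coset $t\EuScript{H}$, and the counting of coset representatives are all valid and cleanly executed.

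The paper itself does not give a proof but merely cites \cite[Theorem~10.10.7 and Problem~22 in Exercises~12]{BEW} and remarks that the statement is a special case of \cite[Proposition~1]{W1}. Your direct character-sum computation is precisely the standard derivation underlying those references, so there is no genuine methodological difference---you have simply written out what the paper delegates to the literature.
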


\begin{proof}
See \cite[Theorem~10.10.7 and Problem~22 in Exercises~12]{BEW} (this lemma is equivalent to a special case of Proposition~1 in \cite{W1}).
\end{proof}

\begin{lem}
\label{l2}
Assume that there exist complex numbers $\omega_0,\omega_1,\dots,\omega_{e-1}$ such that for $n=1,2,\dots,e$,
\begin{equation}
\label{eq1}
N[x_1^e+\dots+x_n^e=0]=q^{n-1}+\frac{q-1}{eq}\sum_{j=0}^{e-1}\omega_j^n.
\end{equation}
Then
$$
P_e^*(X)=\prod_{j=0}^{e-1}(X-\omega_j),
$$
that is, the sequence $\{\omega_0,\omega_1,\dots,\omega_{e-1}\}$ is just a permutation of the sequence\linebreak $\{\eta_0^*,\eta_1^*,\dots,\eta_{e-1}^*\}$.
\end{lem}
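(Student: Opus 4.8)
The plan is to turn the hypothesis into an identity among power sums and then read off the polynomial via Newton's identities. First I would apply Lemma~\ref{l1}, which expresses exactly the same quantities $N[x_1^e+\dots+x_n^e=0]$ occurring in \eqref{eq1} in terms of the $\eta_j^*$. Equating the two expressions for each $n=1,2,\dots,e$ gives
$$
q^{n-1}+\frac{q-1}{eq}\sum_{j=0}^{e-1}(\eta_j^*)^n=q^{n-1}+\frac{q-1}{eq}\sum_{j=0}^{e-1}\omega_j^n ,
$$
and since $(q-1)/(eq)\neq 0$ this collapses to
$$
\sum_{j=0}^{e-1}(\eta_j^*)^n=\sum_{j=0}^{e-1}\omega_j^n\qquad\text{for }n=1,2,\dots,e .
$$
Thus the two multisets $\{\eta_0^*,\dots,\eta_{e-1}^*\}$ and $\{\omega_0,\dots,\omega_{e-1}\}$, each of cardinality $e$, share the same power sums $p_1,\dots,p_e$.

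Next I would invoke Newton's identities over $\mathbb C$: for any family of $e$ complex numbers, the elementary symmetric functions $\sigma_1,\dots,\sigma_e$ are determined recursively from the power sums $p_1,\dots,p_e$ through relations of the shape $k\sigma_k=\sum_{i=1}^{k}(-1)^{i-1}\sigma_{k-i}\,p_i$ (with $\sigma_0=1$), whose coefficients $1,2,\dots,e$ are invertible. Hence equality of the power sums up to order $e$ forces $\sigma_k(\eta_0^*,\dots,\eta_{e-1}^*)=\sigma_k(\omega_0,\dots,\omega_{e-1})$ for every $k=1,\dots,e$, and therefore
$$
P_e^*(X)=\prod_{j=0}^{e-1}(X-\eta_j^*)=X^e+\sum_{k=1}^{e}(-1)^k\sigma_k X^{e-k}=\prod_{j=0}^{e-1}(X-\omega_j) .
$$
Since a monic polynomial of degree $e$ over $\mathbb C$ has a unique factorization into linear factors, the identity $\prod_j(X-\omega_j)=\prod_j(X-\eta_j^*)$ says precisely that $\{\omega_0,\dots,\omega_{e-1}\}$ is a permutation, with multiplicities, of $\{\eta_0^*,\dots,\eta_{e-1}^*\}$, which is the claim.

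I do not expect a genuine obstacle here; the only point that needs care is that Newton's identities recover all of $\sigma_1,\dots,\sigma_e$ from exactly the data $p_1,\dots,p_e$ that \eqref{eq1} supplies — this is why the hypothesis is imposed for $n$ running up to $e$ and not fewer — and that the recursion is legitimate because we work in characteristic zero, where $1,2,\dots,e$ are units. (Equivalently, one can view $\sigma_1,\dots,\sigma_e$ as the unique solution of the triangular linear system relating power sums and symmetric functions, so no separate appeal to Newton's identities is strictly necessary.)
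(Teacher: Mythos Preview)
Your proof is correct and follows essentially the same approach as the paper: apply Lemma~\ref{l1} to equate the power sums $\sum_j(\eta_j^*)^n$ and $\sum_j\omega_j^n$ for $n=1,\dots,e$, then use Newton's identities to conclude that the elementary symmetric functions agree, so the two monic degree-$e$ polynomials coincide. The extra remarks you make about $(q-1)/(eq)\ne 0$ and working in characteristic zero are sound and simply make explicit what the paper leaves implicit.
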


\begin{proof}
Let $\sigma_1,\dots,\sigma_e$ be elementary symmetric polynomials in the variables $y_1,\dots,y_e$ and $s_n=s_n(y_1,\dots,y_e)=y_1^n+\dots+y_e^n$ for $n\ge 1$. Lemma~\ref{l1} yields
$$
s_n(\eta_0^*,\eta_1^*,\dots,\eta_{e-1}^*)=s_n(\omega_0,\omega_1,\dots,\omega_{e-1})\quad\text{for $n=1,2,\dots,e$}.
$$
Using Newton's formula
$$
n\sigma_n=s_1\sigma_{n-1}-s_2\sigma_{n-2}+\dots+(-1)^{n-2}s_{n-1}\sigma_1+(-1)^{n-1}s_n
$$
for $n=2,\dots,e$, we infer that
$$
\sigma_n(\eta_0^*,\eta_1^*,\dots,\eta_{e-1}^*)=\sigma_n(\omega_0,\omega_1,\dots,\omega_{e-1})\quad\text{for $n=1,2,\dots,e$}.
$$
Therefore, $\eta_0^*,\eta_1^*,\dots,\eta_{e-1}^*$ and $\omega_0,\omega_1,\dots,\omega_{e-1}$ are roots of the same monic polynomial of degree~$e$, as desired.
\end{proof}

Lemma~$\ref{l2}$ shows that the reduced cyclotomic periods of order $e$ and $P_e^*(X)$ can be easily computed once we know a formula of the type~\eqref{eq1}. Let us present two examples.

\begin{ex}
Assume that $q=2^s$, $f$ is a prime, $f\mid(q-1)$, $2$ is a primitive root modulo~$f$ and $e=(q-1)/f$. Then 
$$
N[x_1^e+\dots+x_n^e=0]=2^{n-f+1}\sum_{j=0}^{(f-1)/2}\binom f{2j}(2^{s-1}-2je)^n
$$
(see \cite[Theorem~5.1]{W2}), or, equivalently,
$$
N[x_1^e+\dots+x_n^e=0]=q^{n-1}+\frac{q-1}{eq}\biggl[\frac{2^{s-f+1}-1}f\cdot q^n+2^{s-f+1}\sum_{j=1}^{(f-1)/2}\frac{\binom f{2j}}f\cdot(q-4je)^n\biggr].
$$
Note that $2^{s-f+1}=2^{(f-1)((s/(f-1))-1)}\equiv 1\pmod{f}$ and $\binom f{2j}\equiv 0\pmod{f}$ for $j=1,2,\dots,(f-1)/2$. Thus
$$
P_e^*(X)=(X-q)^{(2^{s-f+1}-1)/f}\prod_{j=1}^{(f-1)/2}(X-q+4je)^{2^{s-f+1}\left.\binom f{2j}\right/f}.
$$
\end{ex}

\begin{ex}
Assume that $\ell$ divides $s$ and $e=(q-1)/(p^{\ell}-1)$. Rewriting the result of Wolfmann~\cite[Theorem~5.2]{W2} as
$$
N[x_1^e+\dots+x_n^e=0]=q^{n-1}+\frac{q-1}{eq}\left[\frac{e-1}{p^{\ell}}\cdot q^n+p^{s-\ell}(1-e)^n\right]
$$
and observing that $e-1=(p^s-p^{\ell})/(p^{\ell}-1)$ is divisible by $p^{\ell}$, we obtain
$$
P_e^*(X)=(X-q)^{(e-1)/p^{\ell}}(X+e-1)^{p^{s-\ell}}.
$$
\end{ex}

\section{Factorization of $P^*_{2^m}(X)$ in the case $p\equiv 3\pmod{8}$}
\label{s3}

In this section, $p\equiv 3\pmod{8}$, $2^m\mid(q-1)$, $m\ge 4$. Notice that\linebreak $\mathop{\rm ord}_2(q-1)=\mathop{\rm ord}_2(p^s-1)=\mathop{\rm ord}_2(p^2-1)+\mathop{\rm ord}_2 s-1=\mathop{\rm ord}_2 s+2$ (for a proof, see \cite[Proposition~1]{Beyl}). Hence,
$$
\gcd(2^m,(q-1)/(p-1))=\begin{cases}
2^m&\text{if $2^{m-1}\mid s$,}\\
2^{m-1}&\text{if $2^{m-2}\parallel s$.}
\end{cases}
$$
Appealing to \cite[Theorem~4]{M}, we conclude that in the case when $2^{m-1}\mid s$, $P_{2^m}^*(X)$ splits over the rationals into linear factors. If $2^{m-2}\parallel s$, then $P_{2^m}^*(X)$ splits into irreducible polynomials of degrees at most 2.

For $3\le r\le m$, define  the integers $A_r$ and $B_r$ by
\begin{equation}
\label{eq2}
p^{s/2^{r-2}}=A_r^2+2B_r^2,\qquad A_r\equiv -1\pmod{4},\qquad p\nmid A_r.
\end{equation}
It is well known \cite[Lemma~3.0.1]{BEW} that for each fixed $r$, the conditions \eqref{eq2} determine $A_r$ and $|B_r|$ uniquely.

\begin{lem}
\label{l3}
Let $p\equiv 3\pmod{8}$ and $m\ge 4$. If $2^{m-1}\mid s$,  then
\begin{align*}
N&[x_1^{2^m}+\dots+x_n^{2^m}=0]\\
&=q^{n-1}+\frac{q-1}{2^mq}\cdot\biggl[2^{m-2}\cdot\Bigl((q^{\frac12}+4B_3q^{\frac14})^n+(q^{\frac12}-4B_3q^{\frac14})^n\Bigr)\biggr.\\
&+2^{m-3}\cdot\Bigl((q^{\frac12}+8B_4q^{\frac38})^n+(q^{\frac12}-8B_4q^{\frac38})^n\Bigr)\\
&\begin{aligned}
+\!\sum_{t=2}^{m-3}\!2^{m-t-2}\Bigl(&(-3q^{\frac12}\!+\!\!\sum_{r=3}^t 2^{r-1}A_rq^{\frac{2^{r-2}-1}{2^{r-1}}}\!\!-\!2^t A_{t+1}q^{\frac{2^{t-1}-1}{2^t}}\!\!+\!2^{t+2} B_{t+3}q^{\frac{2^{t+1}-1}{2^{t+2}}})^n\Bigr.\\
+&(-3q^{\frac12}\!+\!\!\sum_{r=3}^t 2^{r-1}A_rq^{\frac{2^{r-2}-1}{2^{r-1}}}\!\!-\!2^t A_{t+1}q^{\frac{2^{t-1}-1}{2^t}}\!\!-\!2^{t+2} B_{t+3}q^{\frac{2^{t+1}-1}{2^{t+2}}})^n\Bigl.\!\Bigr)\end{aligned}\\
&+2\cdot\Bigl(-3q^{\frac 12}+\sum_{r=3}^{m-2} 2^{r-1}A_rq^{\frac{2^{r-2}-1}{2^{r-1}}}-2^{m-2} A_{m-1}q^{\frac{2^{m-3}-1}{2^{m-2}}}\Bigr)^n\\
&+\Bigl(\!-3q^{\frac 12}\!+\!\sum_{r=3}^{m-1}2^{r-1}A_r q^{\frac{2^{r-2}-1}{2^{r-1}}}\!-2^{m-1}A_m q^{\frac{2^{m-2}-1}{2^{m-1}}}\Bigr)^n\!\!\!+\biggl.\Bigl(\!-3q^{\frac 12}\!+\!\sum_{r=3}^{m}2^{r-1}A_r q^{\frac{2^{r-2}-1}{2^{r-1}}}\Bigr)^n\biggr].
\end{align*}
If $2^{m-2}\parallel s$ and $m\ge 5$,  then
\begin{align*}
N&[x_1^{2^m}+\dots+x_n^{2^m}=0]\\
&=q^{n-1}+\frac{q-1}{2^mq}\biggl[2^{m-2}\cdot\Bigl((q^{\frac12}+4B_3q^{\frac14})^n+(q^{\frac12}-4B_3q^{\frac14})^n\Bigr)\biggr.\\
&+2^{m-3}\cdot\Bigl((q^{\frac12}+8B_4q^{\frac38})^n+(q^{\frac12}-8B_4q^{\frac38})^n\Bigr)\\
&\begin{aligned}
+\!\sum_{t=2}^{m-4}\!2^{m-t-2}\Bigl(&(-3q^{\frac12}\!+\!\!\sum_{r=3}^t 2^{r-1}A_rq^{\frac{2^{r-2}-1}{2^{r-1}}}\!\!-\!2^t A_{t+1}q^{\frac{2^{t-1}-1}{2^t}}\!\!+\!2^{t+2} B_{t+3}q^{\frac{2^{t+1}-1}{2^{t+2}}})^n\Bigr.\\
+&(-3q^{\frac12}\!+\!\!\sum_{r=3}^t 2^{r-1}A_rq^{\frac{2^{r-2}-1}{2^{r-1}}}\!\!-\!2^t A_{t+1}q^{\frac{2^{t-1}-1}{2^t}}\!\!-\!2^{t+2} B_{t+3}q^{\frac{2^{t+1}-1}{2^{t+2}}})^n\Bigl.\!\Bigr)\end{aligned}\\
&\begin{aligned}
+2\cdot\Bigl(&(-3q^{\frac 12}+\!\!\sum_{r=3}^{m-3} \!2^{r-1}A_rq^{\frac{2^{r-2}-1}{2^{r-1}}}-2^{m-3} A_{m-2}q^{\frac{2^{m-4}-1}{2^{m-3}}}\!+\!2^{m-1}B_mq^{\frac{2^{m-2}-1}{2^{m-1}}}i)^n\Bigr.\\
+&(-3q^{\frac 12}+\!\!\sum_{r=3}^{m-3} \!2^{r-1}A_rq^{\frac{2^{r-2}-1}{2^{r-1}}}-2^{m-3} A_{m-2}q^{\frac{2^{m-4}-1}{2^{m-3}}}\!-\!2^{m-1}B_mq^{\frac{2^{m-2}-1}{2^{m-1}}}i)^n\Bigl.\!\Bigr)\end{aligned}\\
&+2\cdot\Bigl(-3q^{\frac 12}+\sum_{r=3}^{m-2} 2^{r-1}A_rq^{\frac{2^{r-2}-1}{2^{r-1}}}-2^{m-2} A_{m-1}q^{\frac{2^{m-3}-1}{2^{m-2}}}\Bigr)^n\\
&+\Bigl(-3q^{\frac 12}+\sum_{r=3}^{m-1}2^{r-1}A_r q^{\frac{2^{r-2}-1}{2^{r-1}}}+2^{m-1}A_m q^{\frac{2^{m-2}-1}{2^{m-1}}}i\Bigr)^n\\
&+\biggl.\Bigl(-3q^{\frac 12}+\sum_{r=3}^{m-1}2^{r-1}A_r q^{\frac{2^{r-2}-1}{2^{r-1}}}-2^{m-1}A_m q^{\frac{2^{m-2}-1}{2^{m-1}}}i\Bigr)^n\biggr].
\end{align*}
If $4\parallel s$, then
\begin{align*}
N&[x_1^{16}+\dots+x_n^{16}=0]=q^{n-1}+\frac{q-1}{16q}\biggl[4\cdot\left((q^{\frac12}+4B_3q^{\frac14})^n+(q^{\frac12}-4B_3q^{\frac14})^n\right)\biggr.\\
&+2\cdot\left((q^{\frac12}+8B_4q^{\frac38}i)^n+(q^{\frac12}-8B_4q^{\frac38}i)^n\right)+2\cdot\left(-3q^{\frac12}-4A_3q^{\frac14}\right)^n\\
&+\left(-3q^{\frac12}+4A_3q^{\frac14}+8A_4q^{\frac38}i\right)^n+\left(-3q^{\frac12}+4A_3q^{\frac14}-8A_4q^{\frac38}i\right)^n\biggl.\biggr].
\end{align*}
The integers $A_r$ and $|B_r|$ are uniquely determined by~\eqref{eq2}.
\end{lem}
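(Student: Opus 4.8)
The plan is to deduce all three formulas from Lemma~\ref{l1} together with the explicit values of the reduced cyclotomic periods of order $2^m$, which are already implicit in \cite{B}, where the solution counts of the diagonal equations $x_1^{2^m}+\dots+x_n^{2^m}=0$ were obtained. By Lemma~\ref{l1}, for every $n\ge1$ one has $N[x_1^{2^m}+\dots+x_n^{2^m}=0]=q^{n-1}+\frac{q-1}{2^mq}\sum_{j=0}^{2^m-1}(\eta_j^*)^n$, so the whole content of the lemma is the claim that, in each of the three cases, the $2^m$ complex numbers listed inside the brackets, counted with the indicated multiplicities $2^{m-2}$, $2^{m-3}$, $2^{m-t-2}$, $2$, $1$, $1$, form a permutation of $\eta_0^*,\dots,\eta_{2^m-1}^*$. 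A preliminary check is that these multiplicities sum to $2^m$, which happens in all three cases; the substantive point is to identify the values.

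For that, I would expand each reduced period in terms of Gauss sums: with $g(\chi)=\sum_{x\in\mathbb F_q^*}\chi(x)\zeta_p^{\mathop{\rm Tr}\nolimits(x)}$, one has $\eta_j^*=\sum_{x\in\mathbb F_q}\zeta_p^{\mathop{\rm Tr}\nolimits(\gamma^jx^{2^m})}=\sum_{1<d\mid 2^m}\ \sum_{\mathop{\rm ord}\chi=d}\overline{\chi}(\gamma^j)\,g(\chi)$. The order-$2$ Gauss sum is the quadratic one, which equals $-q^{1/2}$ here since $p\equiv3\pmod4$ and $4\mid s$; the order-$4$ Gauss sums fall into the semiprimitive case (as $p\equiv-1\pmod4$) and evaluate to $\pm q^{1/2}$; these two contributions are what produce the leading terms $q^{1/2}$ and $-3q^{1/2}$ of the listed values. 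For $d=2^r$ with $3\le r\le m$, the decisive point is that, precisely because $p\equiv 3\pmod 8$, the order of $p$ modulo $2^r$ is $2^{r-2}$, so $\langle p\rangle$ has index $2$ in $(\mathbb Z/2^r\mathbb Z)^{\times}$ and its fixed field in $\mathbb Q(\zeta_{2^r})$ is $\mathbb Q(\sqrt{-2})$. Hence the Gauss sums of order $2^r$ lie, up to elementary factors, in $\mathbb Z[\sqrt{-2}]$, and are evaluated, by Davenport--Hasse lifting from $\mathbb F_{p^{2^{r-2}}}$ together with the known evaluation of these index-$2$ Gauss sums, in terms of the quadratic partitions $p^{s/2^{r-2}}=A_r^2+2B_r^2$ of \eqref{eq2}; this is exactly the computation already carried out in \cite{B}, which I would quote. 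Unwinding the dependence on $r$ is what produces the nested partial sums $\sum_{r=3}^{t}2^{r-1}A_rq^{(2^{r-2}-1)/2^{r-1}}$ occurring in the statement.

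The split between $2^{m-1}\mid s$ and $2^{m-2}\parallel s$ reflects the reality of the periods. Since $\mathop{\rm ord}_2(q-1)=\mathop{\rm ord}_2 s+2$ and $\overline{\eta_j^*}=\eta_{j+(q-1)/2}^*$, if $2^{m-1}\mid s$ then $2^{m+1}\mid q-1$, so $2^m\mid(q-1)/2$ and every $\eta_j^*$ is real; if $2^{m-2}\parallel s$ then $2^m\parallel q-1$, so $(q-1)/2\equiv 2^{m-1}\pmod{2^m}$ and the periods pair up into complex-conjugate pairs $\eta_j^*,\eta_{j+2^{m-1}}^*$, which is precisely the origin of the imaginary terms $\pm 2^{m-1}A_mq^{(2^{m-2}-1)/2^{m-1}}i$ (and, for $m\ge5$, $\pm 2^{m-1}B_mq^{(2^{m-2}-1)/2^{m-1}}i$). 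The case $4\parallel s$, $m=4$, is recorded separately because it is the base of the recursion: only $A_3,B_3,A_4,B_4$ enter, there are no intermediate levels, and here it is the Gauss sums of order $16$ (not those of order $8$) that become imaginary, since $\chi_{16}(\gamma^{j+8})=-\chi_{16}(\gamma^j)$ while $\chi_8(\gamma^{j+8})=\chi_8(\gamma^j)$, which is why the terms $\pm 8B_4q^{3/8}i$ and $\pm 8A_4q^{3/8}i$ carry a factor $i$ whereas the analogous terms in the general $2^{m-1}\mid s$ case are real. With the period values in hand, one partitions $\{0,1,\dots,2^m-1\}$ into the blocks on which $\eta_j^*$ is constant, reads off the common value and the size of each block, and substitutes into Lemma~\ref{l1}. (Alternatively, one could verify \eqref{eq1} for $n=1,\dots,2^m$ with the listed $\omega_j$ and then invoke Lemma~\ref{l2}; but extracting the $\eta_j^*$ directly from \cite{B} is cleaner and gives the identity for all $n$ at once.)

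I expect the main obstacle to be the combinatorial bookkeeping, not any single estimate: one must track, through the recursion in $r$, exactly which periods coincide and with what multiplicity, keep the half-integer powers of $q$, the signs of the $B_r$, and the factors $i$ mutually consistent, and --- most delicately --- match the normalizations of \cite{B} and of the underlying Gauss-sum evaluations with the convention $A_r\equiv-1\pmod 4$, $p\nmid A_r$ of \eqref{eq2}, under which $A_r$ and $|B_r|$ are uniquely determined by \cite[Lemma~3.0.1]{BEW}. Once that dictionary is fixed, each of the three displays reduces to a finite, essentially mechanical verification.
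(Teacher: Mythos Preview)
Your proposal is mathematically sound, but it inverts the logic of the paper and does far more work than is needed. In the paper, Lemma~\ref{l3} is not proved at all: the entire proof reads ``See \cite[Theorems~18 and 19]{B}.'' Those theorems already give the solution-count formulas for $N[x_1^{2^m}+\dots+x_n^{2^m}=0]$ in exactly the form stated, so the lemma is a direct quotation. You yourself note that \cite{B} is ``where the solution counts of the diagonal equations $x_1^{2^m}+\dots+x_n^{2^m}=0$ were obtained''; having said that, there is nothing left to prove.

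Your plan runs the argument in the opposite direction: first determine the $\eta_j^*$ via Gauss sums (Davenport--Hasse lifting, index-$2$ evaluations in $\mathbb Z[\sqrt{-2}]$, the quadratic partitions \eqref{eq2}), then feed them into Lemma~\ref{l1} to recover $N$. In the paper this flow is reversed: Lemma~\ref{l3} (i.e.\ the $N$-formulas from \cite{B}) comes first, and the period values in Corollary~\ref{c1} are then \emph{deduced} from it via Lemma~\ref{l2}. So what you describe is essentially a re-derivation of \cite[Theorems~18 and 19]{B} from scratch, followed by an application of Lemma~\ref{l1} that merely reproduces the input. That is a legitimate independent proof, and your outline of the Gauss-sum analysis (order of $p$ modulo $2^r$, the fixed field $\mathbb Q(\sqrt{-2})$, the real/complex dichotomy governed by $\mathop{\rm ord}_2 s$) is accurate; but for the purposes of this paper it is redundant, and the ``combinatorial bookkeeping'' you flag as the main obstacle is precisely the content of \cite{B} that the citation is meant to spare you.
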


\begin{proof}
See \cite[Theorems~18 and 19]{B}.
\end{proof}

Combining Lemmas~\ref{l2} and \ref{l3}, we deduce the following corollary.

\begin{cor}
\label{c1}
Under the conditions of Lemma~$\ref{l3}$, the reduced cyclotomic periods of order $2^m$ are given by Tables~$\ref{tab1}$--$\ref{tab3}$.
\end{cor}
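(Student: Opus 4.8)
The plan is to read off the corollary as a direct translation of Lemma~\ref{l3} through Lemma~\ref{l2}. Concretely, in each of the three cases (namely $2^{m-1}\mid s$; $2^{m-2}\parallel s$ with $m\ge 5$; and $4\parallel s$) Lemma~\ref{l3} expresses $N[x_1^{2^m}+\dots+x_n^{2^m}=0]$ in exactly the shape $q^{n-1}+\frac{q-1}{2^mq}\sum_{j=0}^{2^m-1}\omega_j^n$ appearing in hypothesis~\eqref{eq1} of Lemma~\ref{l2}, where the multiset $\{\omega_j\}$ consists of the listed quantities, each taken with the multiplicity written in front of it. The first task is therefore purely bookkeeping: verify that in each case the multiplicities of the displayed values sum to $2^m$. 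For instance, when $2^{m-1}\mid s$ one has $2^{m-2}\cdot 2+2^{m-3}\cdot 2+\sum_{t=2}^{m-3}2^{m-t-2}\cdot 2+2\cdot 1+1+1$, and the geometric sum $\sum_{t=2}^{m-3}2^{m-t-1}=2^{m-2}-4$ makes the total $2^{m-1}+2^{m-2}+(2^{m-2}-4)+2+1+1=2^m$; the analogous count must be checked in the other two cases.

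Once the count is confirmed, Lemma~\ref{l3} shows that the formula~\eqref{eq1} holds for this choice of $\omega_0,\dots,\omega_{2^m-1}$ and for every $n$ (in particular for $n=1,\dots,2^m$), so Lemma~\ref{l2} applies verbatim and gives
$$
P_{2^m}^*(X)=\prod_{j=0}^{2^m-1}(X-\omega_j),
$$
i.e.\ the multiset $\{\eta_0^*,\dots,\eta_{2^m-1}^*\}$ equals the multiset $\{\omega_j\}$. The remaining step is to record this multiset in the tabular form of Tables~\ref{tab1}--\ref{tab3}: each row of a table should list one of the values $\omega_j$ from the corresponding case of Lemma~\ref{l3} together with its multiplicity, and one should double-check consistency across the boundary small cases — in particular that the $m=4$ instance with $2^{m-1}=8\mid s$ agrees with specializing the first formula, and that the $4\parallel s$ display is the $m=4$ specialization of the second ($2^{m-2}\parallel s$) pattern with the relevant $B_m$ term reinterpreted. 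One should also confirm that the Galois-conjugate pairs carrying a factor $i$ (the imaginary entries in the $2^{m-2}\parallel s$ case) are correctly grouped, since those are precisely the $\eta_j^*$ lying in a real quadratic subfield rather than in $\mathbb{Q}$, matching the earlier remark that $P_{2^m}^*(X)$ then splits into factors of degree at most~$2$.

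The argument itself has essentially no obstacle: it is an immediate corollary, and the only place where care is needed is the combinatorial one — getting the multiplicities and the case boundaries exactly right, especially the truncations of the inner sums $\sum_{r=3}^{t}$, $\sum_{r=3}^{m-3}$, $\sum_{r=3}^{m-2}$, $\sum_{r=3}^{m-1}$, $\sum_{r=3}^{m}$ and the signs of the leading $A_r$- and $B_r$-terms, so that the table entries faithfully reproduce the $\omega_j$ of Lemma~\ref{l3}. I expect that transcription, rather than any mathematical difficulty, to be the substance of the proof.
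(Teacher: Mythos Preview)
Your approach is exactly the paper's: the corollary is stated as an immediate consequence of combining Lemma~\ref{l2} with Lemma~\ref{l3}, and the only content is the bookkeeping you describe. One small slip: the complex conjugate pairs carrying a factor of $i$ in the $2^{m-2}\parallel s$ case are genuinely non-real (they lie in an imaginary quadratic extension, not a real quadratic subfield), which is why they give rise to irreducible quadratic factors of $P_{2^m}^*(X)$; but this does not affect the argument for the corollary itself.
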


\begin{table}[t]
\caption{The reduced cyclotomic periods of order $2^m$ in the case when $p\equiv 3\pmod{8}$, $2^{m-1}\mid s$, $m\ge 4$ ($t$ runs from $2$ to $m-3$).}
\label{tab1}
\footnotesize
\begin{tabularx}{\textwidth}{l @{\extracolsep{\fill}} c}
\hline
Value&Multiplicity\\
\hline
$q^{\frac12}\pm 4B_3q^{\frac14}$ & $2^{m-2}$ \\
$q^{\frac12}\pm 8B_4q^{\frac38}$ & $2^{m-3}$ \\
$-3q^{\frac12}+\sum\limits_{r=3}^t 2^{r-1}A_rq^{\frac{2^{r-2}-1}{2^{r-1}}}-2^t A_{t+1}q^{\frac{2^{t-1}-1}{2^t}}\pm2^{t+2} B_{t+3}q^{\frac{2^{t+1}-1}{2^{t+2}}}$ & $2^{m-t-2}$  \\
$-3q^{\frac 12}+\sum\limits_{r=3}^{m-2} 2^{r-1}A_rq^{\frac{2^{r-2}-1}{2^{r-1}}}-2^{m-2} A_{m-1}q^{\frac{2^{m-3}-1}{2^{m-2}}}$ & $2$ \\
$-3q^{\frac 12}+\sum\limits_{r=3}^{m-1}2^{r-1}A_r q^{\frac{2^{r-2}-1}{2^{r-1}}}\pm 2^{m-1}A_m q^{\frac{2^{m-2}-1}{2^{m-1}}}$ & 1 \\
\hline
\end{tabularx}
\end{table}

\begin{table}[t]
\caption{The reduced cyclotomic periods of order $2^m$ in the case when $p\equiv 3\pmod{8}$, $2^{m-2}\parallel s$, $m\ge 5$ ($t$ runs from $2$ to $m-4$).}
\label{tab2}
\footnotesize
\begin{tabularx}{\textwidth}{l @{\extracolsep{\fill}} c}
\hline
Value&Multiplicity\\
\hline
$q^{\frac12}\pm 4B_3q^{\frac14}$ & $2^{m-2}$ \\
$q^{\frac12}\pm 8B_4q^{\frac38}$ & $2^{m-3}$ \\
$-3q^{\frac12}+\sum\limits_{r=3}^t 2^{r-1}A_rq^{\frac{2^{r-2}-1}{2^{r-1}}}-2^t A_{t+1}q^{\frac{2^{t-1}-1}{2^t}}\!\pm\!2^{t+2} B_{t+3}q^{\frac{2^{t+1}-1}{2^{t+2}}}$  & $2^{m-t-2}$\\  
$-3q^{\frac 12}+\sum\limits_{r=3}^{m-3} 2^{r-1}A_rq^{\frac{2^{r-2}-1}{2^{r-1}}}-2^{m-3} A_{m-2}q^{\frac{2^{m-4}-1}{2^{m-3}}}\!\pm 2^{m-1}B_mq^{\frac{2^{m-2}-1}{2^{m-1}}}i$ & $2$\\
$-3q^{\frac 12}+\sum\limits_{r=3}^{m-2} 2^{r-1}A_rq^{\frac{2^{r-2}-1}{2^{r-1}}}-2^{m-2} A_{m-1}q^{\frac{2^{m-3}-1}{2^{m-2}}}$ & $2$ \\
$-3q^{\frac 12}+\sum\limits_{r=3}^{m-1}2^{r-1}A_r q^{\frac{2^{r-2}-1}{2^{r-1}}}\pm 2^{m-1}A_m q^{\frac{2^{m-2}-1}{2^{m-1}}}i$ & 1 \\
\hline
\end{tabularx}
\end{table}

\begin{table}[t]
\caption{The reduced cyclotomic periods of order $16$ in the case when $p\equiv 3\pmod{8}$, $4\parallel s$.}
\label{tab3}
\footnotesize
\begin{tabularx}{\textwidth}{l @{\extracolsep{\fill}} c}
\hline
Value&Multiplicity\\
\hline
$q^{\frac12}\pm 4B_3q^{\frac14}$ & $4$ \\
$q^{\frac12}\pm 8B_4q^{\frac38}i$ & $2$ \\
$-3q^{\frac 12}-4 A_3 q^{\frac 14}$ & $2$ \\
$-3q^{\frac 12}+4 A_3 q^{\frac 14}\pm 8 A_4 q^{\frac 38}i$ & 1 \\
\hline
\end{tabularx}
\end{table}

We are now in a position to prove the main result of this section.

\begin{thm}
\label{t1}
Let $p\equiv 3\pmod{8}$ and $m\ge 4$. Then $P_{2^m}^*(X)$  has a unique decomposition into irreducible polynomials over the rationals as follows:
\begin{itemize}
\item[\rm (a)] 
if $2^{m-1}\mid s$, then
\begin{align*}
P_{2^m}^*(X)=\,& (X-q^{\frac 12}+4B_3 q^{\frac 14})^{2^{m-2}} (X-q^{\frac 12}-4B_3 q^{\frac 14})^{2^{m-2}}\\
&\times (X-q^{\frac 12}+8B_4 q^{\frac 38})^{2^{m-3}} (X-q^{\frac 12}-8B_4 q^{\frac 38})^{2^{m-3}}\\
&\times \Bigl(X+3q^{\frac 12}-\sum_{r=3}^{m-2} 2^{r-1}A_r q^{\frac{2^{r-2}-1}{2^{r-1}}}+2^{m-2}A_{m-1} q^{\frac{2^{m-3}-1}{2^{m-2}}}\Bigr)^2
\end{align*}
\begin{align*}
&\times \Bigl(X+3q^{\frac 12}-\sum_{r=3}^{m-1} 2^{r-1}A_r q^{\frac{2^{r-2}-1}{2^{r-1}}}+2^{m-1}A_m q^{\frac{2^{m-2}-1}{2^{m-1}}}\Bigr)\\
&\times \Bigl(X+3q^{\frac 12}-\sum_{r=3}^m 2^{r-1}A_r q^{\frac{2^{r-2}-1}{2^{r-1}}}\Bigr)\prod_{t=2}^{m-3}Q_t(X)^{2^{m-t-2}};
\end{align*}
\item[\rm (b)]
if $2^{m-2}\parallel s$ and $m\ge 5$, then
\begin{align*}
P_{2^m}^*(X)=\,& (X-q^{\frac 12}+4B_3 q^{\frac 14})^{2^{m-2}} (X-q^{\frac 12}-4B_3 q^{\frac 14})^{2^{m-2}}\\
&\times (X-q^{\frac 12}+8B_4 q^{\frac 38})^{2^{m-3}} (X-q^{\frac 12}-8B_4 q^{\frac 38})^{2^{m-3}}\\
&\times \Bigl(X+3q^{\frac 12}-\sum_{r=3}^{m-2} 2^{r-1}A_r q^{\frac{2^{r-2}-1}{2^{r-1}}}+2^{m-2}A_{m-1} q^{\frac{2^{m-3}-1}{2^{m-2}}}\Bigr)^2\\
&\times \biggl(\Bigl(X+3q^{\frac 12}-\sum_{r=3}^{m-1} 2^{r-1}A_r q^{\frac{2^{r-2}-1}{2^{r-1}}}\Bigr)^2+2^{2(m-1)}A_m^2 q^{\frac{2^{m-2}-1}{2^{m-2}}}\biggr)\\
&\times \biggl(\Bigl(X+3q^{\frac 12}-\sum_{r=3}^{m-3} 2^{r-1}A_r q^{\frac{2^{r-2}-1}{2^{r-1}}}+2^{m-3}A_{m-2} q^{\frac{2^{m-4}-1}{2^{m-3}}}\Bigr)^2\biggr.\\
&\hskip42pt+\biggl.2^{2(m-1)}B_m^2 q^{\frac{2^{m-2}-1}{2^{m-2}}}\biggr)^2\, \prod_{t=2}^{m-4}Q_t(X)^{2^{m-t-2}};
\end{align*}
\item[\rm (c)]
if $4\parallel s$, then
\begin{align*}
P_{16}^*(X)=\,&(X+3q^{\frac 12}+4A_3 q^{\frac 14})^2 (X-q^{\frac 12}+4B_3 q^{\frac 14})^4 (X-q^{\frac 12}-4B_3 q^{\frac 14})^4\\
&\times\left((X+3q^{\frac 12}-4A_3 q^{\frac 14})^2+64A_4^2 q^{\frac 34}\right)\left((X-q^{\frac 12})^2+64B_4^2 q^{\frac 34}\right)^2.
\end{align*}
\end{itemize}
The integers $A_r$ and $|B_r|$ are uniquely determined by~\eqref{eq2}, and
\begin{align*}
Q_t(X)=\,&\bigl(X+3q^{\frac 12}-\sum_{r=3}^t 2^{r-1}A_r q^{\frac{2^{r-2}-1}{2^{r-1}}}+2^t A_{t+1} q^{\frac{2^{t-1}-1}{2^t}}+2^{t+2}B_{t+3}q^{\frac{2^{t+1}-1}{2^{t+2}}}\bigr)\\
&\times\!\Bigl(X+3q^{\frac 12}-\!\sum_{r=3}^t 2^{r-1}A_r q^{\frac{2^{r-2}-1}{2^{r-1}}}\!\!+2^t A_{t+1} q^{\frac{2^{t-1}-1}{2^t}}\!\!-2^{t+2}B_{t+3}q^{\frac{2^{t+1}-1}{2^{t+2}}}\Bigr).
\end{align*}
\end{thm}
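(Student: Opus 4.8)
The plan is to read the factorization straight off Corollary~\ref{c1}. By definition $P_{2^m}^*(X)=\prod_{j=0}^{2^m-1}(X-\eta_j^*)$, and Tables~\ref{tab1}--\ref{tab3} give the multiset $\{\eta_0^*,\dots,\eta_{2^m-1}^*\}$ together with multiplicities, so everything reduces to grouping the linear factors $X-\eta_j^*$ into the polynomials claimed to be irreducible over $\mathbb Q$. The decisive step is to decide, for each table entry, whether it is rational or a genuine non-real complex number. Every entry is a $\mathbb Z$-linear combination of powers $q^{a}=p^{as}$ with $a\in\{\tfrac12,\tfrac14,\tfrac38\}$ or $a=\tfrac{2^{r-2}-1}{2^{r-1}}$, $a=\tfrac{2^{t-1}-1}{2^t}$, $a=\tfrac{2^{t+1}-1}{2^{t+2}}$ over the ranges of $r,t$ in the tables, possibly multiplied by $i$; such a power lies in $\mathbb Z$ precisely when the $2$-power denominator of $a$ divides $s$, since all the numerators are odd.

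For part~(a), where $2^{m-1}\mid s$, every denominator occurring in Table~\ref{tab1} divides $2^{m-1}$, so each $\eta_j^*$ is a rational integer. Then $P_{2^m}^*(X)$ is simply the product of the linear polynomials $X-\eta_j^*$ with the multiplicities of Table~\ref{tab1}; collecting them, with the two entries $-3q^{1/2}+\dots\pm 2^{t+2}B_{t+3}q^{(2^{t+1}-1)/2^{t+2}}$ grouped into $Q_t(X)$ and the $\pm$ pair in the next-to-last row contributing the two separate linear factors, yields the displayed expression. This is consistent with the structural fact recalled at the start of Section~\ref{s3}: here $\gcd(2^m,(q-1)/(p-1))=2^m$, so $P_{2^m}^*$ must split into linear factors over $\mathbb Q$.

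For parts~(b) and~(c), where $2^{m-2}\parallel s$, write $s=2^{m-2}s'$ with $s'$ odd. Every denominator occurring in the tables, apart from the $2^{m-1}$ (resp.\ the $8$ when $m=4$) attached to the entries carrying an $i$, divides $2^{m-2}\mid s$; hence all those entries are rational integers and give the linear factors and the $Q_t(X)$ blocks displayed in the theorem. For the remaining entries, $\tfrac{(2^{m-2}-1)s}{2^{m-1}}=\tfrac{(2^{m-2}-1)s'}{2}$ is a half-integer (resp.\ $\tfrac{3s}{8}=\tfrac{3s'}{2}$ when $m=4$), so $q^{(2^{m-2}-1)/2^{m-1}}$ (resp.\ $q^{3/8}$) is irrational, and because of the factor $i$ each such $\eta_j^*$ is genuinely non-real. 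Since $P_{2^m}^*\in\mathbb Z[X]$, its non-real roots occur in complex-conjugate pairs: the conjugate of such a table entry is the entry obtained by flipping the $\pm$ sign, with the same multiplicity, so each pair contributes $(X-u)^2+v^2$, where $u$ is the (integral) real part and $v$ the imaginary part. One computes $v^2$ to be $2^{2(m-1)}A_m^2q^{(2^{m-2}-1)/2^{m-2}}$, $2^{2(m-1)}B_m^2q^{(2^{m-2}-1)/2^{m-2}}$, $64A_4^2q^{3/4}$, or $64B_4^2q^{3/4}$ as appropriate, each a \emph{positive} rational integer, since $(2^{m-2}-1)s/2^{m-2}=(2^{m-2}-1)s'$ and $3s/4=3s'$ are integers, $A_r\ne0$ by $p\nmid A_r$ in~\eqref{eq2}, and $B_r\ne0$ (a zero $B_r$ would give $p^{s/2^{r-2}}=A_r^2$, forcing $p\mid A_r$). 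Thus each of these quadratics lies in $\mathbb Z[X]$ and, having no real root, is irreducible over $\mathbb Q$. Reinserting them with the multiplicities $1$ and $2$ from Tables~\ref{tab2}--\ref{tab3} produces the bracketed factors of~(b) and~(c); in~(b) the index of the $Q_t$-product drops from $m-3$ to $m-4$ because the $t=m-3$ entry (whose $B$-term is $2^{m-1}B_mq^{(2^{m-2}-1)/2^{m-1}}$) has migrated to the non-real families, and in~(c) that product is empty since $m=4$. This agrees with $\gcd(2^m,(q-1)/(p-1))=2^{m-1}$, so all irreducible factors have degree at most $2$.

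Uniqueness of the decomposition is automatic because $\mathbb Q[X]$ is a unique factorization domain; to see that the displayed product is already written in reduced form, one checks that the listed values and the roots of the listed quadratics are pairwise distinct, by comparing the coefficient of $q^{1/2}$, then of $q^{1/4}$, then of successively higher powers $q^{(2^{r-2}-1)/2^{r-1}}$ in any two expressions being compared, using $A_r\ne0$ and $B_r\ne0$ to separate the $\pm$ choices and the shifted families. I expect the main labour to be precisely this bookkeeping, together with the rationality/irrationality audit above --- tracking, in each of the many sub-sums, exactly which power of $2$ divides $s$ --- rather than any conceptual difficulty: once Corollary~\ref{c1} is available, no deep step remains.
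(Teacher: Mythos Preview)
Your proposal is correct and follows essentially the same route as the paper's proof: both read the multiset of roots off Corollary~\ref{c1} (Tables~\ref{tab1}--\ref{tab3}), observe that in case~(a) every root is a rational integer while in cases~(b) and~(c) the entries carrying the factor~$i$ are genuinely non-real, and then pair each non-real root with its complex conjugate to obtain the irreducible quadratics. Your write-up is in fact more explicit than the paper's in justifying why the relevant powers of $q$ are integers (via the $2$-adic denominator check against $s$), why $A_r,B_r\ne 0$, and why the listed factors are pairwise distinct; the paper simply asserts these points by reference to the tables.
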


\begin{proof}
First assume that $2^{m-1}\mid s$. In this case, all the cyclotomic periods are integers (see Table~\ref{tab1}), and the result follows.

Next assume that $2^{m-2}\parallel s$ and $m\ge 5$. In this case, $P_{2^m}^*(X)$ has two pairs of complex conjugate roots, namely, 
$$
-3q^{\frac 12}+\sum\limits_{r=3}^{m-1}2^{r-1}A_r q^{\frac{2^{r-2}-1}{2^{r-1}}}\pm 2^{m-1}A_m q^{\frac{2^{m-2}-1}{2^{m-1}}}i
$$
and
$$
-3q^{\frac 12}+\sum\limits_{r=3}^{m-3} 2^{r-1}A_rq^{\frac{2^{r-2}-1}{2^{r-1}}}-2^{m-3} A_{m-2}q^{\frac{2^{m-4}-1}{2^{m-3}}}\pm 2^{m-1}B_mq^{\frac{2^{m-2}-1}{2^{m-1}}}i,
$$
and the remaining roots are integers (see Table~\ref{tab2}). Hence $P_{2^m}^*(X)$ has the irreducible quadratic factors
$$
\Bigl(X+3q^{\frac 12}-\sum_{r=3}^{m-1} 2^{r-1}A_r q^{\frac{2^{r-2}-1}{2^{r-1}}}\Bigr)^2
+2^{2(m-1)}A_m^2q^{\frac{2^{m-2}-1}{2^{m-2}}}
$$
and
$$
\Bigl(X+3q^{\frac 12}\!-\!\!\sum_{r=3}^{m-3}  2^{r-1}A_r q^{\frac{2^{r-2}-1}{2^{r-1}}}+2^{m-3}A_{m-2}q^{\frac{2^{m-4}-1}{2^{m-3}}}\Bigr)^2
+2^{2(m-1)}B_m^2q^{\frac{2^{m-2}-1}{2^{m-2}}}
$$
(the last one occurs with multiplicity 2). The remaining factors are linear and occur with the multiplicities given in Table~\ref{tab2}.

Finally, if $4\parallel s$, then $P_{16}^*(X)$ has complex conjugate roots $q^{\frac12}\pm 8B_4q^{\frac38}i$, $-3q^{\frac 12}+4 A_3 q^{\frac 14}\pm 8 A_4 q^{\frac 38}i$, and the other roots are integers (see Table~\ref{tab3}). Taking into account the multiplicities given in Table~\ref{tab3}, we obtain the desired factorization. This completes the proof.
\end{proof}

\begin{rmk}
\label{r1}
The result of Gurak~\cite[Proposition~3.3(iii)]{G3} can be reformulated in terms of $A_3$ and $B_3$. Namely, $P_8^*(X)$   has the following factorization into irreducible polynomials over the rationals:
\begin{align*}
P_8^*(X)=\,& (X-q^{1/2})^2 (X-q^{1/2}+4B_3 q^{1/4})^2 (X-q^{1/2}-4B_3 q^{1/4})^2 &\\
&\times (X+3q^{1/2}+4A_3 q^{1/4})(X+3q^{1/2}-4A_3 q^{1/4})&\text{if $4\mid s$,}\\
P_8^*(X)=\,&(X-3q^{1/2})^2 &\\
&\times\left((X+q^{1/2})^2+16A_3^2 q^{1/2}\right)\left((X+q^{1/2})^2+16B_3^2 q^{1/2}\right)^2 &\text{if $2\parallel s$.}
\end{align*}
We see that Theorem~\ref{t1} is not valid for $m=3$.
\end{rmk}

\section{Factorization of $P^*_{2^m}(X)$ in the case $p\equiv 5\pmod{8}$}
\label{s4}

In this section, $p\equiv 5\pmod{8}$, $2^m\mid(q-1)$, $m\ge 3$. As in the previous section, we have  $\mathop{\rm ord}_2(q-1)=\mathop{\rm ord}_2(p^s-1)=\mathop{\rm ord}_2 s+2$, and thus
$$
\gcd(2^m,(q-1)/(p-1))=\begin{cases}
2^m&\text{if $2^m\mid s$,}\\
2^{m-1}&\text{if $2^{m-1}\parallel s$,}\\
2^{m-2}&\text{if $2^{m-2}\parallel s$.}
\end{cases}
$$
Using \cite[Theorem~4]{M}, we see that $P_{2^m}^*(X)$ splits over the rationals into linear factors if $2^m\mid s$, splits into linear and quadratic irreducible factors if $2^{m-1}\parallel s$, and splits into linear, quadratic and biquadratic irreducible factors if $2^{m-2}\parallel s$.

For $2\le r\le m-1$, define  the integers $C_r$ and $D_r$ by
\begin{equation}
\label{eq3}
p^{s/2^{r-1}}=C_r^2+D_r^2,\qquad C_r\equiv -1\pmod{4},\qquad p\nmid C_r.
\end{equation}
If $2^{m-1}\mid s$, we extend this notation to $r=m$. It is well known \cite[Lem\-ma~3.0.1]{BEW} that for each fixed $r$, the conditions~\eqref{eq3} determine $C_r$ and $|D_r|$ uniquely.

\begin{lem}
\label{l4}
Let $p\equiv 5\pmod{8}$ and $m\ge 3$. If $2^{m-1}\mid s$, then
\begin{align*}
N&[x_1^{2^m}+\dots+x_n^{2^m}=0]\\
&=q^{n-1}+\frac{q-1}{2^mq}\cdot\biggl[2^{m-2}\cdot\Bigl((q^{\frac12}+2D_2q^{\frac14})^n+(q^{\frac12}-2D_2q^{\frac14})^n\Bigr)\biggr.\\
&\begin{aligned}
+\sum_{t=1}^{m-2}2^{m-t-2}\Bigl(&(-q^{\frac12}\!+\!\sum_{r=2}^t 2^{r-1}C_rq^{\frac{2^{r-1}-1}{2^{r}}}\!-\!2^t C_{t+1}q^{\frac{2^{t}-1}{2^{t+1}}}\!+\!2^{t+1} D_{t+2}q^{\frac{2^{t+1}-1}{2^{t+2}}})^n\Bigr.\\
+&(-q^{\frac12}\!+\!\sum_{r=2}^t 2^{r-1}C_rq^{\frac{2^{r-1}-1}{2^{r}}}\!-\!2^t C_{t+1}q^{\frac{2^{t}-1}{2^{t+1}}}\!-\!2^{t+1} D_{t+2}q^{\frac{2^{t+1}-1}{2^{t+2}}})^n\Bigl.\Bigr)\end{aligned}\\
&+\Bigl(-q^{\frac 12}+\sum_{r=2}^{m-1}2^{r-1}C_r q^{\frac{2^{r-1}-1}{2^{r}}}-2^{m-1}C_m q^{\frac{2^{m-1}-1}{2^{m}}}\Bigr)^n\\
&+\biggl.\Bigl(-q^{\frac 12}+\sum_{r=2}^{m}2^{r-1}C_r q^{\frac{2^{r-1}-1}{2^{r}}}\Bigr)^n\biggr].
\end{align*}
If $2^{m-2}\parallel s$, then
\begin{align*}
N&[x_1^{2^m}+\dots+x_n^{2^m}=0]\\
&=q^{n-1}+\frac{q-1}{2^mq}\cdot\biggl[2^{m-2}\cdot\Bigl((q^{\frac12}+2D_2q^{\frac14})^n+(q^{\frac12}-2D_2q^{\frac14})^n\Bigr)\biggr.\\
&\begin{aligned}
+\sum_{t=1}^{m-3}2^{m-t-2}\Bigl(&(-q^{\frac12}\!+\!\sum_{r=2}^t 2^{r-1}C_rq^{\frac{2^{r-1}-1}{2^{r}}}\!-\!2^t C_{t+1}q^{\frac{2^{t}-1}{2^{t+1}}}\!+\!2^{t+1} D_{t+2}q^{\frac{2^{t+1}-1}{2^{t+2}}})^n\Bigr.\\
+&(-q^{\frac12}\!+\!\sum_{r=2}^t 2^{r-1}C_rq^{\frac{2^{r-1}-1}{2^{r}}}\!-\!2^t C_{t+1}q^{\frac{2^{t}-1}{2^{t+1}}}\!-\!2^{t+1} D_{t+2}q^{\frac{2^{t+1}-1}{2^{t+2}}})^n\Bigl.\Bigr)\end{aligned}\\
&\begin{aligned}
+\Bigl(-q^{\frac 12}+\sum_{r=2}^{m-2}2^{r-1}C_r q^{\frac{2^{r-1}-1}{2^{r}}}&+2^{m-2}C_{m-1} q^{\frac{2^{m-2}-1}{2^{m-1}}}\\ &+2^{m-2}q^{\frac{2^{m-1}-1}{2^{m}}}i\sqrt{2(q^{\frac1{2^{m-1}}}-C_{m-1})}\,\Bigr)^n
\end{aligned}\\
&\begin{aligned}
+\Bigl(-q^{\frac 12}+\sum_{r=2}^{m-2}2^{r-1}C_r q^{\frac{2^{r-1}-1}{2^{r}}}&+2^{m-2}C_{m-1} q^{\frac{2^{m-2}-1}{2^{m-1}}}\\ &-2^{m-2}q^{\frac{2^{m-1}-1}{2^{m}}}i\sqrt{2(q^{\frac1{2^{m-1}}}-C_{m-1})}\,\Bigr)^n
\end{aligned}\\
\end{align*}
\begin{align*}
&\begin{aligned}
+\Bigl(-q^{\frac 12}+\sum_{r=2}^{m-2}2^{r-1}C_r q^{\frac{2^{r-1}-1}{2^{r}}}&-2^{m-2}C_{m-1} q^{\frac{2^{m-2}-1}{2^{m-1}}}\\ &+2^{m-2}q^{\frac{2^{m-1}-1}{2^{m}}}i\sqrt{2(q^{\frac1{2^{m-1}}}+C_{m-1})}\,\Bigr)^n
\end{aligned}\\
&\begin{aligned}
+\Bigl(-q^{\frac 12}+\sum_{r=2}^{m-2}2^{r-1}C_r q^{\frac{2^{r-1}-1}{2^{r}}}&-2^{m-2}C_{m-1} q^{\frac{2^{m-2}-1}{2^{m-1}}}\\&-2^{m-2}q^{\frac{2^{m-1}-1}{2^{m}}}i\sqrt{2(q^{\frac1{2^{m-1}}}+C_{m-1}\big)}\,\Bigr)^n\biggl.\biggr]. \end{aligned}
\end{align*}
The integers $C_r$ and $|D_r|$ are uniquely determined by~\eqref{eq3}.
\end{lem}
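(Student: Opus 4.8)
We only sketch the argument, since the statement is the analogue of Lemma~\ref{l3} for primes $p\equiv 5\pmod 8$ and, like that lemma, it is a consequence of the solution counts for diagonal equations established in~\cite{B}. The starting point is the classical character-sum formula
$$
N[x_1^{2^m}+\dots+x_n^{2^m}=0]=q^{n-1}+\frac{q-1}{q}\sum_{\substack{1\le t_1,\dots,t_n\le 2^m-1\\ 2^m\mid t_1+\dots+t_n}}g(\chi^{t_1})\cdots g(\chi^{t_n}),
$$
where $\chi$ is a fixed multiplicative character of $\mathbb F_q^*$ of order $2^m$ and $g(\,\cdot\,)$ is the associated Gauss sum. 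Since the characters of $2$-power order form a cyclic group and $g(\chi^t)g(\chi^{2^m-t})=\chi^t(-1)q$, the value of this multiple sum is determined by the individual Gauss sums $g(\chi_r)$ attached to characters $\chi_r$ of exact order $2^r$, $1\le r\le m$; so the main task is to evaluate those $m$ Gauss sums.

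These Gauss sums are computed by climbing the $2$-adic tower of subfields of $\mathbb F_q$. Because $p\equiv 5\pmod 8$ one has $\mathrm{ord}_2(p^k-1)=\mathrm{ord}_2 k+2$ for all $k\ge 1$, so the smallest subfield of $\mathbb F_q$ carrying a character $\psi_r$ of order $2^r$ is $\mathbb F_p$ for $r\le 2$ and $\mathbb F_{p^{2^{r-2}}}$ for $r\ge 3$; since $\chi_r$ is the lift of $\psi_r$ along the norm map, Davenport--Hasse gives $g(\chi_r)=(-1)^{t-1}g(\psi_r)^t$ for the appropriate extension degree $t$. The quadratic Gauss sum is $\pm q^{1/2}$ by Gauss; the quartic one is reduced to a Jacobi sum via $g(\chi_2)^2=g(\chi_2^2)J(\chi_2,\chi_2)$, and the value of $J(\chi_2,\chi_2)$ in $\mathbb Z[i]$, normalized by a congruence condition, carries the arithmetic of the partition $p^{s/2}=C_2^2+D_2^2$; for $r\ge 3$ one descends from order $2^r$ to order $2^{r-1}$ by the same device $g(\psi_r)^2=g(\psi_r^2)J(\psi_r,\psi_r)$, evaluating $J(\psi_r,\psi_r)$ in $\mathbb Z[\zeta_{2^r}]$. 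Pushed to the bottom of the tower, this expresses every $g(\chi_r)$ through quadratic Gauss sums (powers of $\sqrt p$), Gaussian primes above $p$, and the higher roots of unity, and it is precisely this that introduces the quadratic partitions $p^{s/2^{r-1}}=C_r^2+D_r^2$ into the final formula.

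The hypothesis $p\equiv 5\pmod 8$ now enters decisively in two ways. First, it forces $-1\notin\langle p\rangle$ modulo $2^r$, so the Gauss sums of order $\ge 4$ are \emph{not} pure and the parameters $C_r,D_r$ genuinely survive into the answer (in contrast to the situation $2^m\mid p^v+1$ recalled in the introduction). Secondly, it pins down the parity of the Davenport--Hasse exponent: when $2^{m-1}\mid s$ this exponent is even for every $r\le m$, so each $g(\chi_r)$ is an even power of $g(\psi_r)$ and, via the descent above, telescopes down to an expression built from $q^{1/2^k}$, $C_r$, $D_r$ and signs — the periods are then real, yielding linear factors if $2^m\mid s$ and quadratic factors if $2^{m-1}\parallel s$; when $2^{m-2}\parallel s$ the exponent is odd for $r=m$, so one lone non-pure Gauss sum of order $2^m$ survives, reducible no further than a square root of a quartic Gauss sum — this is the origin of the irrationality $i\sqrt{2(q^{1/2^{m-1}}\pm C_{m-1})}$ in the last four lines, where $q^{1/2^{m-1}}=\sqrt{C_{m-1}^2+D_{m-1}^2}$, and it produces the biquadratic factors.

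It remains to substitute the Gauss-sum values into the character sum, cancel the conjugate pairs produced by $t\leftrightarrow 2^m-t$, and collect equal summands; the multiplicities $2^{m-2},2^{m-3},\dots$ count the index tuples $(t_1,\dots,t_n)$ giving each value, and one recovers the stated shape $q^{n-1}+\frac{q-1}{2^m q}\,[\cdots]$. The step I expect to be the main obstacle is the inductive evaluation of the Jacobi sums $J(\psi_r,\psi_r)$ of order $2^r$ for $r\ge 3$: fixing the sign of $C_r$ (the normalization $C_r\equiv -1\pmod 4$, which together with $|D_r|$ pins these parameters down), and, when $2^{m-2}\parallel s$, deciding which of $\sqrt{2(q^{1/2^{m-1}}-C_{m-1})}$ and $\sqrt{2(q^{1/2^{m-1}}+C_{m-1})}$ attaches to each of the four conjugate terms. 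These points are carried out in full in~\cite[Theorems~18 and~19]{B}, and the lemma follows.
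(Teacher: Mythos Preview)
Your sketch is a reasonable outline of how such counting formulas are derived (character-sum expression, Davenport--Hasse lifting along the $2$-adic tower, Jacobi-sum evaluations introducing the quadratic partitions), and it is considerably more expository than what the paper does: the paper's entire proof is the one-line citation ``See \cite[Theorems~22 and 23]{B}.'' Since you, too, ultimately defer the technical work to~\cite{B}, the two proofs are in the same spirit.

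There is, however, a concrete error in your final citation. You write that ``These points are carried out in full in~\cite[Theorems~18 and~19]{B},'' but Theorems~18 and~19 of~\cite{B} treat the case $p\equiv 3\pmod 8$ --- they are precisely the results invoked in Lemma~\ref{l3}. For the present lemma ($p\equiv 5\pmod 8$) the correct reference is \cite[Theorems~22 and~23]{B}. Since your proof is really a pointer to~\cite{B} dressed up with motivation, getting the theorem numbers right matters.
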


\begin{proof}
See \cite[Theorems~22 and 23]{B}.
\end{proof}

Combining Lemmas~\ref{l2} and \ref{l4}, we obtain the following corollary.

\begin{cor}
\label{c2}
Under the conditions of Lemma~$\ref{l4}$, the reduced cyclotomic periods of order $2^m$ are given by Tables~$\ref{tab4}$ and $\ref{tab5}$.
\end{cor}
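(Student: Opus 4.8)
The plan is to derive Corollary~\ref{c2} as a direct consequence of Lemmas~\ref{l2} and \ref{l4}, exactly as Corollary~\ref{c1} was derived from Lemmas~\ref{l2} and \ref{l3}. First I would observe that each of the two formulas in Lemma~\ref{l4} is already written in the shape required by the hypothesis \eqref{eq1} of Lemma~\ref{l2}: one has an expression of the form $N[x_1^{2^m}+\dots+x_n^{2^m}=0]=q^{n-1}+\frac{q-1}{2^mq}\sum_{j=0}^{2^m-1}\omega_j^n$, valid for every $n\ge 1$ (in particular for $n=1,2,\dots,2^m$), where the numbers $\omega_j$ are read off from the bracketed sum, each listed with the multiplicity given by the integer coefficient in front of it. Thus I would first check the bookkeeping, namely that the multiplicities add up correctly to $2^m$ in each case, so that there really are $2^m$ values $\omega_j$ (counted with multiplicity).

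Concretely, in the case $2^{m-1}\mid s$ the multiplicities are $2^{m-2}$ (twice, for $q^{1/2}\pm 2D_2q^{1/4}$), then $2^{m-t-2}$ for each $t$ from $1$ to $m-2$ taken twice (for the two conjugate-type values with $\pm 2^{t+1}D_{t+2}$), and finally $1$ twice for the last two values; summing gives $2\cdot 2^{m-2}+2\sum_{t=1}^{m-2}2^{m-t-2}+2 = 2^{m-1}+2(2^{m-2}-1)+2 = 2^m$, as needed. In the case $2^{m-2}\parallel s$ the count is $2\cdot 2^{m-2}+2\sum_{t=1}^{m-3}2^{m-t-2}+4 = 2^{m-1}+2(2^{m-3}-1+ \cdots )+4$; carrying out the geometric sum again yields $2^m$, with the four "square-root" values each appearing once. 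Having verified this, Lemma~\ref{l2} applies verbatim and tells us that the multiset $\{\omega_0,\dots,\omega_{2^m-1}\}$ coincides with the multiset $\{\eta_0^*,\dots,\eta_{2^m-1}^*\}$ of reduced cyclotomic periods of order $2^m$. Reading the distinct values and their multiplicities out of Lemma~\ref{l4} then produces precisely the entries of Tables~\ref{tab4} and \ref{tab5}, which completes the proof.

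The only genuine content beyond invoking Lemma~\ref{l2} is the passage between the two displayed forms of $N[\,\cdot\,]$ and the arithmetic of the multiplicities, together with the observation that Lemma~\ref{l4} indeed supplies the required identity for \emph{all} $n$ and not merely for $n\le 2^m$ (which is all Lemma~\ref{l2} needs). I do not anticipate a real obstacle here: the work is entirely transcriptional. The one place to be slightly careful is the case $2^{m-2}\parallel s$ with $m=3$, where some of the summation ranges ($t$ from $1$ to $m-3=0$) are empty and the corresponding families of periods simply do not occur; one should note that Tables~\ref{tab4} and \ref{tab5} are to be read with this convention, so that for $m=3$ the table lists only the $q^{1/2}\pm 2D_2q^{1/4}$ values and the four square-root values, matching Lemma~\ref{l4} specialized to $m=3$.

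\begin{proof}
By Lemma~\ref{l4}, in each of the two cases the quantity $N[x_1^{2^m}+\dots+x_n^{2^m}=0]$ is of the form $q^{n-1}+\frac{q-1}{2^mq}\sum_{j=0}^{2^m-1}\omega_j^n$ for every positive integer $n$, where the $\omega_j$ are the numbers appearing inside the bracket, each repeated according to the integer coefficient preceding it. A direct count shows that these coefficients sum to $2^m$: when $2^{m-1}\mid s$ one gets $2\cdot 2^{m-2}+2\sum_{t=1}^{m-2}2^{m-t-2}+2=2^m$, and when $2^{m-2}\parallel s$ one gets $2\cdot 2^{m-2}+2\sum_{t=1}^{m-3}2^{m-t-2}+4=2^m$, so there are exactly $2^m$ values $\omega_j$ counted with multiplicity. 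In particular \eqref{eq1} holds for $n=1,2,\dots,2^m$, and Lemma~\ref{l2} gives $P_{2^m}^*(X)=\prod_{j=0}^{2^m-1}(X-\omega_j)$; equivalently, the multiset $\{\omega_0,\dots,\omega_{2^m-1}\}$ is a permutation of $\{\eta_0^*,\dots,\eta_{2^m-1}^*\}$. Listing the distinct values together with their multiplicities yields Tables~\ref{tab4} and \ref{tab5}. (When $m=3$ and $2^{m-2}\parallel s$ the summation range for $t$ is empty and the corresponding periods do not occur; the tables are read with this convention.)
\end{proof}
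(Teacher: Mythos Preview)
Your proposal is correct and follows exactly the approach the paper intends: the paper's own justification is simply ``Combining Lemmas~\ref{l2} and \ref{l4}, we obtain the following corollary,'' and you have merely spelled out that combination, including the routine check that the multiplicities sum to $2^m$. There is nothing to add.
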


\begin{table}[t]
\caption{The reduced cyclotomic periods of order $2^m$ in the case when $p\equiv 5\pmod{8}$, $2^{m-1}\mid s$, $m\ge 3$ ($t$ runs from $1$ to $m-2$).}
\label{tab4}
\footnotesize
\begin{tabularx}{\textwidth}{l @{\extracolsep{\fill}} c}
\hline
Value&Multiplicity\\
\hline
$q^{\frac12}\pm 2D_2q^{\frac14}$ & $2^{m-2}$ \\
$-q^{\frac12}+\sum\limits_{r=2}^t 2^{r-1}C_rq^{\frac{2^{r-1}-1}{2^{r}}}-2^t C_{t+1}q^{\frac{2^{t}-1}{2^{t+1}}}\pm 2^{t+1} D_{t+2}q^{\frac{2^{t+1}-1}{2^{t+2}}}$ & $2^{m-t-2}$  \\
$-q^{\frac 12}+\sum\limits_{r=2}^{m-1}2^{r-1}C_r q^{\frac{2^{r-1}-1}{2^{r}}}\pm 2^{m-1}C_m q^{\frac{2^{m-1}-1}{2^{m}}}$ & 1 \\
\hline
\end{tabularx}
\end{table}

\begin{table}[t]
\caption{The reduced cyclotomic periods of order $2^m$ in the case when $p\equiv 5\pmod{8}$, $2^{m-2}\parallel s$, $m\ge 3$ ($t$ runs from $1$ to $m-3$).}
\label{tab5}
\footnotesize
\begin{tabularx}{\textwidth}{l @{\extracolsep{\fill}} c}
\hline
Value&Multiplicity\\
\hline
$q^{\frac12}\pm 2D_2q^{\frac14}$ & $2^{m-2}$ \\
$-q^{\frac12}+\sum\limits_{r=2}^t 2^{r-1}C_rq^{\frac{2^{r-1}-1}{2^{r}}}-2^t C_{t+1}q^{\frac{2^{t}-1}{2^{t+1}}}\pm 2^{t+1} D_{t+2}q^{\frac{2^{t+1}-1}{2^{t+2}}}$ & $2^{m-t-2}$  \\
$-q^{\frac 12}+\sum\limits_{r=2}^{m-2}2^{r-1}C_r q^{\frac{2^{r-1}-1}{2^{r}}}+2^{m-2}C_{m-1} q^{\frac{2^{m-2}-1}{2^{m-1}}}$&\\
\hskip116pt $\pm 2^{m-2}q^{\frac{2^{m-1}-1}{2^{m}}}i\sqrt{2(q^{\frac1{2^{m-1}}}-C_{m-1})}$ & 1 \\
$-q^{\frac 12}+\sum\limits_{r=2}^{m-2}2^{r-1}C_r q^{\frac{2^{r-1}-1}{2^{r}}}-2^{m-2}C_{m-1} q^{\frac{2^{m-2}-1}{2^{m-1}}}$&\\
\hskip116pt $\pm 2^{m-2}q^{\frac{2^{m-1}-1}{2^{m}}}i\sqrt{2(q^{\frac1{2^{m-1}}}+C_{m-1})}$ & 1 \\
\hline
\end{tabularx}
\end{table}

We are now ready to establish our second main result.

\begin{thm}
\label{t2}
Let $p\equiv 5\pmod{8}$ and $m\ge 4$. Then $P_{2^m}^*(X)$   has a unique decomposition into irreducible polynomials over the rationals as follows:
\begin{itemize}
\item[\rm (a)]
if $2^m\mid s$, then
\begin{align*}
P_{2^m}^*(X)=\,& (X-q^{\frac 12}+2D_2 q^{\frac 14})^{2^{m-2}} (X-q^{\frac 12}-2D_2 q^{\frac 14})^{2^{m-2}}\\
&\times\Bigl(X+q^{\frac 12}-\sum_{r=2}^{m-1}2^{r-1}C_r q^{\frac{2^{r-1}-1}{2^r}}+2^{m-1}C_m q^{\frac{2^{m-1}-1}{2^m}}\Bigr)\\
&\times\Bigl(X+q^{\frac 12}-\sum_{r=2}^m 2^{r-1}C_r q^{\frac{2^{r-1}-1}{2^r}}\Bigr)\prod_{t=1}^{m-2}R_t(X)^{2^{m-t-2}};
\end{align*}
\item[\rm (b)]
if $2^{m-1}\parallel s$, then
\begin{align*}
P_{2^m}^*(X)=\,& (X-q^{\frac 12}+2D_2 q^{\frac 14})^{2^{m-2}} (X-q^{\frac 12}-2D_2 q^{\frac 14})^{2^{m-2}}\\
&\times\left(\Bigl(X+q^{\frac 12}-\sum_{r=2}^{m-1}2^{r-1}C_r q^{\frac{2^{r-1}-1}{2^r}}\Bigl)^2 -2^{2(m-1)}C_m^2 q^{\frac{2^{m-1}-1}{2^{m-1}}}\right)\\
&\times\Biggl(\Bigl(X+q^{\frac 12}-\sum_{r=2}^{m-2}2^{r-1}C_r q^{\frac{2^{r-1}-1}{2^r}}+2^{m-2}C_{m-1}q^{\frac{2^{m-2}-1}{2^{m-1}}}\Bigr)^2\Biggr.\\
&\hskip20pt -\Biggl.2^{2(m-1)}D_m^2 q^{\frac{2^{m-1}-1}{2^{m-1}}}\Biggr)\prod_{t=1}^{m-3}R_t(X)^{2^{m-t-2}};
\end{align*}
\item[\rm (c)]
if $2^{m-2}\parallel s$, then
\begin{align*}
\hskip25pt P_{2^m}^*(&X)=(X-q^{\frac 12}+2D_2 q^{\frac 14})^{2^{m-2}} (X-q^{\frac 12}-2D_2 q^{\frac 14})^{2^{m-2}}\\
&\times\biggl(\Bigl(X+q^{\frac 12}-\sum_{r=2}^{m-3}2^{r-1}C_r q^{\frac{2^{r-1}-1}{2^r}}+2^{m-3}C_{m-2}q^{\frac{2^{m-3}-1}{2^{m-2}}}\Bigr)^2\biggr.\\
&\hskip26pt-\biggl.2^{2(m-2)}D_{m-1}^2 q^{\frac{2^{m-2}-1}{2^{m-2}}}\biggr)^2\\
&\times\Biggl(\biggl(\Bigl(X\!+q^{\frac 12}-\!\!\sum_{r=2}^{m-2}2^{r-1}C_r q^{\frac{2^{r-1}-1}{2^r}}\Bigr)^2\!\!+2^{2(m-2)}C_{m-1}^2 q^{\frac{2^{m-2}-1}{2^{m-2}}}\!\!+2^{2m-3}q\biggr)^2\Biggr.\\
&\hskip26pt -2^{2(m-1)}C_{m-1}^2 q^{\frac{2^{m-2}-1}{2^{m-2}}}\Biggl.\Bigl(X\!+(2^{m-2}\!+1)q^{\frac 12}-\!\!\sum_{r=2}^{m-2}2^{r-1}C_r q^{\frac{2^{r-1}-1}{2^r}}\Bigr)^2\Biggr)\\
&\times\prod_{t=1}^{m-4}R_t(X)^{2^{m-t-2}}.
\end{align*}
\end{itemize}
The integers $C_r$ and $|D_r|$ are uniquely determined by~\eqref{eq3}, and
\begin{align*}
R_t(X)=\,&\Bigl(X+q^{\frac 12}-\sum_{r=2}^t 2^{r-1}C_r q^{\frac{2^{r-1}-1}{2^r}}+2^t C_{t+1} q^{\frac{2^t-1}{2^{t+1}}}+2^{t+1} D_{t+2} q^{\frac{2^{t+1}-1}{2^{t+2}}}\Bigr)\\
&\times\Bigl(X+q^{\frac 12}-\sum_{r=2}^t 2^{r-1}C_r q^{\frac{2^{r-1}-1}{2^r}}+2^t C_{t+1} q^{\frac{2^t-1}{2^{t+1}}}-2^{t+1} D_{t+2} q^{\frac{2^{t+1}-1}{2^{t+2}}}\Bigr).
\end{align*}
\end{thm}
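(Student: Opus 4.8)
The plan is to follow the pattern of the proof of Theorem~\ref{t1}. By Corollary~\ref{c2}, the list $\eta_0^*,\dots,\eta_{2^m-1}^*$ of reduced cyclotomic periods, with multiplicities, is the one recorded in Table~\ref{tab4} (when $2^{m-1}\mid s$, i.e.\ in cases (a) and (b)) or in Table~\ref{tab5} (case (c)); hence $P_{2^m}^*(X)=\prod_{j}(X-\eta_j^*)$, and it remains to sort these roots into conjugacy classes over $\mathbb Q$ and multiply out. As $\mathbb Q[X]$ is a unique factorization domain, the uniqueness claim is automatic once each exhibited factor is shown irreducible.

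First I would carry out the arithmetic bookkeeping that fixes the shape of each period. With $q=p^s$, a quantity $q^{a/2^k}=p^{sa/2^k}$ with $a$ odd is a rational integer exactly when $2^k\mid s$, and when $2^k\parallel 2s$ it is an integer multiple of $\sqrt p$. The hypothesis $m\ge4$ gives $4\mid s$, so $q^{1/2}$ and $q^{1/4}$ are always integers and the first row of Tables~\ref{tab4},~\ref{tab5} contributes $(X-q^{1/2}-2D_2q^{1/4})^{2^{m-2}}(X-q^{1/2}+2D_2q^{1/4})^{2^{m-2}}$; the finer divisibilities $2^m\mid s$, $2^{m-1}\parallel s$, $2^{m-2}\parallel s$ then decide, row by row, which periods are integers. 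In case~(a) every tabulated value is an integer, so $P_{2^m}^*$ splits into linear factors and grouping the ``$\pm$'' entries for each $t$ into $R_t(X)$ gives the stated product. In case~(b) the second-row entries with $t\le m-3$ are still integers (yielding $R_t(X)^{2^{m-t-2}}$), while the $t=m-2$ entry of the second row and the last row of Table~\ref{tab4} are each a pair $a\pm b\sqrt p$ ($a,b\in\mathbb Z$, $b\ne0$) whose product $(X-a)^2-b^2p$ is the displayed quadratic; this is irreducible over $\mathbb Q$ because the exponent of $p$ in $b^2p$ is odd. Here and below one uses $C_r\ne0$ and $D_r\ne0$, forced by $p\nmid C_r$ and~\eqref{eq3} (if $D_r=0$ then $p^{s/2^{r-1}}=C_r^2$ would force $p\mid C_r$).

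The one case needing real work is~(c). Put $c=C_{m-1}$, $d=D_{m-1}$, write $s=2^{m-2}u$ with $u$ odd, and set $P=q^{1/2^{m-1}}=p^{u/2}$; relation~\eqref{eq3} with $r=m-1$ says $P^2=p^u=c^2+d^2$, so $P>|c|$, the nested radicals in Table~\ref{tab5} are honest positive reals, and the crucial identity $\sqrt{2(P-c)}\cdot\sqrt{2(P+c)}=2|d|$ is rational. The four periods of the last two rows are $\alpha_\pm=A+B\pm iG\sqrt{2(P-c)}$ and $\beta_\pm=A-B\pm iG\sqrt{2(P+c)}$, with $A=-q^{1/2}+\sum_{r=2}^{m-2}2^{r-1}C_rq^{(2^{r-1}-1)/2^r}\in\mathbb Z$, $B=2^{m-2}C_{m-1}q^{(2^{m-2}-1)/2^{m-1}}\in\sqrt p\,\mathbb Z\setminus\{0\}$, and $G=2^{m-2}q^{(2^{m-1}-1)/2^m}$. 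I would compute $(X-\alpha_+)(X-\alpha_-)=(X-A-B)^2+2G^2(P-c)$ and $(X-\beta_+)(X-\beta_-)=(X-A+B)^2+2G^2(P+c)$ and multiply them; using $B^2=2^{2(m-2)}C_{m-1}^2q^{(2^{m-2}-1)/2^{m-2}}$, $2G^2P=2^{2m-3}q$ and $2B(X-A)+2G^2c=2^{m-1}C_{m-1}q^{(2^{m-2}-1)/2^{m-1}}\bigl(X-A+2^{m-2}q^{1/2}\bigr)$, the product collapses to exactly the quartic in part~(c), which lies in $\mathbb Z[X]$ since $q^{1/2}$, $q$ and $q^{(2^{m-2}-1)/2^{m-2}}$ are integers. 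Irreducibility over $\mathbb Q$ I would get by elimination: the four roots are distinct and non-real, so there is no rational linear factor; and the only complex-conjugation-stable pairs, $\{\alpha_+,\alpha_-\}$ and $\{\beta_+,\beta_-\}$, give quadratics with linear coefficient $-2(A\pm B)\notin\mathbb Q$, so there is no rational quadratic factor. The remaining rows of Table~\ref{tab5} are treated as in case~(b): the $t=m-3$ entry is a conjugate pair $a\pm b\sqrt p$ giving the displayed quadratic, squared because its multiplicity is $2$, and the entries with $t\le m-4$ give the integers packaged into $\prod_{t=1}^{m-4}R_t(X)^{2^{m-t-2}}$.

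The main obstacle is case~(c): keeping straight the half- and quarter-integer exponents of $p$, verifying the three algebraic identities above so that the product of the four conjugate linear factors telescopes into the displayed closed form, and confirming that the resulting quartic is irreducible. Everything else is a routine transcription of Corollary~\ref{c2}; a good consistency check is to verify that the degrees of the factors add up to $2^m$ in each case.
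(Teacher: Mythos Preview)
Your proposal is correct and follows essentially the same route as the paper: read off the roots with multiplicities from Corollary~\ref{c2} (Tables~\ref{tab4} and~\ref{tab5}), decide case by case which of the $q^{a/2^k}$ are rational, group Galois-conjugate roots, and multiply out. The only notable difference is in the irreducibility of the quartic in case~(c): the paper observes that the two real quadratic factors $(X-\alpha_+)(X-\alpha_-)$ and $(X-\beta_+)(X-\beta_-)$ lie in $\mathbb R[X]\setminus\mathbb Q[X]$ and are irreducible over $\mathbb R$, then invokes unique factorization in $\mathbb R[X]$ to conclude their product is irreducible over $\mathbb Q$; you instead argue directly that any rational quadratic factor would have to be one of these two (as the only complex-conjugation-stable pairs of roots) and has irrational linear coefficient $-2(A\pm B)$. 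The two arguments are equivalent, and your explicit verification of the identities $2G^2P=2^{2m-3}q$ and $2B(X-A)+2G^2c=2^{m-1}C_{m-1}q^{(2^{m-2}-1)/2^{m-1}}(X-A+2^{m-2}q^{1/2})$ makes the collapse to the displayed quartic transparent.
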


\begin{proof}
First suppose that $2^m\mid s$. It follows from Table~\ref{tab4} that all the cyclotomic periods are integers in this case. This yields the desired factorization.

Next suppose that $2^{m-1}\parallel s$. In this case, we have two pairs of algebraic conjugates of degree 2 among the cyclotomic periods, namely,
$$
-q^{\frac 12}+\sum\limits_{r=2}^{m-1}2^{r-1}C_r q^{\frac{2^{r-1}-1}{2^{r}}}\pm 2^{m-1}C_m q^{\frac{2^{m-1}-1}{2^{m}}}
$$
and
$$
-q^{\frac12}+\sum\limits_{r=2}^{m-2} 2^{r-1}C_rq^{\frac{2^{r-1}-1}{2^{r}}}-2^{m-2} C_{m-1}q^{\frac{2^{m-2}-1}{2^{m-1}}}\pm 2^{m-1} D_{m}q^{\frac{2^{m-1}-1}{2^{m}}},
$$
and the remaining roots of $P_{2^m}^*(X)$  are integers (see Table~\ref{tab4}). Therefore $P_{2^m}^*(X)$   has the irreducible quadratic factors
$$
\Bigl(X+q^{\frac 12}-\sum_{r=2}^{m-1}2^{r-1}C_r q^{\frac{2^{r-1}-1}{2^r}}\Bigl)^2-2^{2(m-1)}C_m^2 q^{\frac{2^{m-1}-1}{2^{m-1}}}
$$
and
$$
\Bigl(X+q^{\frac 12}-\sum_{r=2}^{m-2}2^{r-1}C_r q^{\frac{2^{r-1}-1}{2^r}}+2^{m-2}C_{m-1}q^{\frac{2^{m-2}-1}{2^{m-1}}}\Bigr)^2-2^{2(m-1)}D_m^2 q^{\frac{2^{m-1}-1}{2^{m-1}}},
$$
and the remaining factors are linear. Taking into account the multiplicities given in Table~\ref{tab4}, we obtain the asserted result.

Finally, suppose that $2^{m-2}\parallel s$. In this case, there is a pair of algebraic conjugates of degree 2 among the cyclotomic periods, namely,
$$
-q^{\frac12}+\sum\limits_{r=2}^{m-3} 2^{r-1}C_rq^{\frac{2^{r-1}-1}{2^{r}}}-2^{m-3} C_{m-2}q^{\frac{2^{m-3}-1}{2^{m-2}}}\pm 2^{m-2} D_{m-1}q^{\frac{2^{m-2}-1}{2^{m-1}}}.
$$
This implies that $P_{2^m}^*(X)$ has the irreducible quadratic factor
$$
\Bigl(X+q^{\frac 12}-\sum_{r=2}^{m-3}2^{r-1}C_r q^{\frac{2^{r-1}-1}{2^r}}+2^{m-3}C_{m-2}q^{\frac{2^{m-3}-1}{2^{m-2}}}\Bigr)^2-2^{2(m-2)}D_{m-1}^2 q^{\frac{2^{m-2}-1}{2^{m-2}}}
$$
occurring with multiplicity 2. Furthermore, the polynomials
\begin{align*}
\Bigl(X+q^{\frac 12}-\sum\limits_{r=2}^{m-2}2^{r-1}C_r q^{\frac{2^{r-1}-1}{2^{r}}}&-2^{m-2}C_{m-1} q^{\frac{2^{m-2}-1}{2^{m-1}}}\Bigr.\\
\Bigl.&+2^{m-2}q^{\frac{2^{m-1}-1}{2^{m}}}i\sqrt{2(q^{\frac1{2^{m-1}}}-C_{m-1})}\,\Bigr) \\
\times\Bigl(X+q^{\frac 12}-\sum\limits_{r=2}^{m-2}2^{r-1}C_r q^{\frac{2^{r-1}-1}{2^{r}}}&-2^{m-2}C_{m-1} q^{\frac{2^{m-2}-1}{2^{m-1}}}\Bigr.\\
\Bigl.&-2^{m-2}q^{\frac{2^{m-1}-1}{2^{m}}}i\sqrt{2(q^{\frac1{2^{m-1}}}-C_{m-1})}\,\Bigr) \\
=\Bigl(X+q^{\frac 12}-\sum\limits_{r=2}^{m-2}2^{r-1}C_r q^{\frac{2^{r-1}-1}{2^{r}}}&-2^{m-2}C_{m-1} q^{\frac{2^{m-2}-1}{2^{m-1}}}\Bigr)^2\\
&+2^{2m-3}q-2^{2m-3}q^{\frac{2^{m-1}-1}{2^{m-1}}}C_{m-1}
\end{align*}
and
\begin{align*}
\Bigl(X+q^{\frac 12}-\sum\limits_{r=2}^{m-2}2^{r-1}C_r q^{\frac{2^{r-1}-1}{2^{r}}}&+2^{m-2}C_{m-1} q^{\frac{2^{m-2}-1}{2^{m-1}}}\Bigr.\\
\Bigl.&+2^{m-2}q^{\frac{2^{m-1}-1}{2^{m}}}i\sqrt{2(q^{\frac1{2^{m-1}}}+C_{m-1})}\,\Bigr) \\
\times\Bigl(X+q^{\frac 12}-\sum\limits_{r=2}^{m-2}2^{r-1}C_r q^{\frac{2^{r-1}-1}{2^{r}}}&+2^{m-2}C_{m-1} q^{\frac{2^{m-2}-1}{2^{m-1}}}\Bigr.\\
\Bigl.&-2^{m-2}q^{\frac{2^{m-1}-1}{2^{m}}}i\sqrt{2(q^{\frac1{2^{m-1}}}+C_{m-1})}\,\Bigr) \\
=\Bigl(X+q^{\frac 12}-\sum\limits_{r=2}^{m-2}2^{r-1}C_r q^{\frac{2^{r-1}-1}{2^{r}}}&+2^{m-2}C_{m-1} q^{\frac{2^{m-2}-1}{2^{m-1}}}\Bigr)^2\\
&+2^{2m-3}q+2^{2m-3}q^{\frac{2^{m-1}-1}{2^{m-1}}}C_{m-1}
\end{align*}
belong to $\mathbb R[X]\setminus\mathbb Q[X]$ and are irreducible over the reals. Since $2^{m-2}\mid s$ and 
$\mathbb R[X]$ is a unique factorization domain, it follows that the product of the above polynomials, namely,
\begin{align*}
\biggl(\Bigl(X&+q^{\frac 12}-\sum_{r=2}^{m-2}2^{r-1}C_r q^{\frac{2^{r-1}-1}{2^r}}\Bigr)^2+2^{2(m-2)}C_{m-1}^2 q^{\frac{2^{m-2}-1}{2^{m-2}}}+2^{2m-3}q\biggr)^2\\
&-2^{2(m-1)}C_{m-1}^2 q^{\frac{2^{m-2}-1}{2^{m-2}}}\Bigl(X+(2^{m-2}+1)q^{\frac 12}-\sum_{r=2}^{m-2}2^{r-1}C_r q^{\frac{2^{r-1}-1}{2^r}}\Bigr)^2
\end{align*}
belongs to $\mathbb Q[X]$ and is irreducible over the rationals. We see from Table~\ref{tab5} that it occurs with multiplicity 1. The remaining factors are linear and occur with the multiplicities given in Table~\ref{tab5}. This concludes the proof.
\end{proof}

\begin{rmk}
Myerson has shown \cite[Theorem~17]{M} that $P_4^*(X)$ is irreducible if $2\nmid s$,
\begin{align*}
P_4^*(X)=\,& (X+q^{1/2}+2C_2 q^{1/4})(X+q^{1/2}-2C_2 q^{1/4})&\\
&\times(X-q^{1/2}+2D_2 q^{1/4}) (X-q^{1/2}-2D_2 q^{1/4})&\text{if $4\mid s$,}\\
\intertext{and, with a slight modification,}
P_4^*(X)=\,&\left((X+q^{1/2})^2-4C_2^2q^{1/2}\right)\left((X-q^{1/2})^2-4D_2^2q^{1/2}\right)&\text{if $2\parallel s$,}
\end{align*}
where in the latter case the quadratic polynomials are irreducible over the rationals. Furthermore, the result of Gurak~\cite[Proposition~3.3(ii)]{G3} can be reformulated in terms of $C_2$, $D_2$, $C_3$ and $D_3$. Namely, $P_8^*(X)$   has the following factorization into irreducible polynomials over the rationals:
\begin{align*}
P_8^*(X)=\,& (X-q^{1/2}+2D_2 q^{1/4})^2 (X-q^{1/2}-2D_2 q^{1/4})^2&\\
&\times(X+q^{1/2}-2C_2 q^{1/4}+4C_3 q^{3/8})&\\
&\times(X+q^{1/2}-2C_2 q^{1/4}-4C_3 q^{3/8})&\\
&\times(X+q^{1/2}+2C_2 q^{1/4}+4D_3 q^{3/8})&\\
&\times(X+q^{1/2}+2C_2 q^{1/4}-4D_3 q^{3/8})&\!\text{if $8\mid s$,}\\
P_8^*(X)=\,& (X-q^{1/2}+2D_2 q^{1/4})^2 (X-q^{1/2}-2D_2 q^{1/4})^2&\\
&\times\left((X+q^{1/2}-2C_2 q^{1/4})^2-16C_3^2 q^{3/4}\right)&\\
&\times\left((X+q^{1/2}+2C_2 q^{1/4})^2-16D_3^2 q^{3/4}\right)&\!\text{if $4\parallel s$,}\\
P_8^*(X)=\,&\left((X-q^{1/2})^2-4D_2^2 q^{1/2}\right)^2&\\
&\times\!\left(\left((X+q^{1/2})^2\!+4C_2^2 q^{1/2}\!+8q\right)^2\!\!-16C_2^2 q^{1/2}(X+3q^{1/2})^2\right)&\!\text{if $2\parallel s$.}
\end{align*}
Thus part~(a) of Theorem~\ref{t2} remains valid for $m=2$ and $m=3$. Moreover, for $m=3$, part~(b) of Theorem~\ref{t2} is still valid (cf. Remark~\ref{r1}).
\end{rmk}

\section{Weight distributions of certain irreducible cyclic codes}
\label{s5}

Let $\mathbb F_{p^{\ell}}$ be a subfield of $\mathbb F_q$ (i.e., $\ell$ divides $s$). A $k$-dimensional linear subspace $\EuScript{C}$ of $\mathbb F_{p^{\ell}}^n$  is called a \emph{linear $[n,k]$ code} over $\mathbb F_{p^{\ell}}$ and $n$ is called the \emph{length} of $\EuScript{C}$. The elements of $\EuScript{C}$ are called \emph{codewords}, and the number $w({\bf c})$ of nonzero components in ${\bf c}\in\EuScript{C}$ is called the \emph{Hamming weight} of ${\bf c}$. The polynomial $1+a_1X+a_2X^2+\dots+a_nX^n$ is called the \emph{weight enumerator} of $\EuScript{C}$ and the vector $(1,a_1,\dots, a_n)$ is called the \emph{weight distribution} of $\EuScript{C}$, where $a_j$ denotes the number of codewords with Hamming weight $j$ in $\EuScript{C}$.

A linear $[n,k]$ code $\EuScript{C}$ is called \emph{cyclic} if $(c_0,c_1,\dots,c_{n-1})\in\EuScript{C}$ implies\linebreak $(c_{n-1},c_0,c_1,\dots,c_{n-2})\in\EuScript{C}$. Assume that $p\nmid n$. By identifying any vector\linebreak $(c_0,c_1,\dots,c_{n-1})\in\mathbb F_{p^{\ell}}^n$ with $c_0+c_1X+\dots+c_{n-1}X^{n-1}\in\mathbb F_{p^{\ell}}[X]/\langle X^n-1\rangle$, any linear cyclic code  $\EuScript{C}$ of length $n$ over $\mathbb F_{p^{\ell}}$ corresponds to an ideal of the principal ideal ring $\mathbb F_{p^{\ell}}[X]/\langle X^n-1\rangle$. If $f(X)$ is an irreducible divisor of $X^n-1$ and $\EuScript{C}$ corresponds to the ideal generated by $(X^n-1)/f(X)$, then $\EuScript{C}$ is called an \emph{irreducible cyclic code}.

Now let $N>1$ be a positive divisor of $q-1$.  As before, $\gamma$ denotes a generator of the cyclic group $\mathbb F_q^*$. Put $\theta=\gamma^N$. If $s/\ell$ is the multiplicative order of $p^{\ell}$ modulo $(q-1)/N$ and $\EuScript{C}$ is an irreducible cyclic $[(q-1)/N,s/\ell]$ code over $\mathbb F_{p^{\ell}}$, then $\EuScript{C}$ is isomorphic to $\mathbb F_q$ and can be represented as
\begin{equation}
\label{eq4}
\EuScript{C}=\{({\mathop{\rm Tr}}_{\mathbb F_q/\mathbb F_{p^{\ell}}}(\beta),{\mathop{\rm Tr}}_{\mathbb F_q/\mathbb F_{p^{\ell}}}(\beta\theta),\dots,{\mathop{\rm Tr}}_{\mathbb F_q/\mathbb F_{p^{\ell}}}(\beta\theta^{((q-1)/N)-1})):\beta\in\mathbb F_q\},
\end{equation}
where ${\mathop{\rm Tr}}_{\mathbb F_q/\mathbb F_{p^{\ell}}}$ denotes the trace mapping from $\mathbb F_q$ to $\mathbb F_{p^{\ell}}$. It has been observed in \cite{DY} that the determination of the weight distribution of $\EuScript{C}$ is equivalent to that of the cyclotomic periods of order $e=\gcd(N,(q-1)/(p^{\ell}-1))$ for $\mathbb F_q$. More precisely, if $\beta\in\gamma^j\EuScript{H}$ for some $j\in\{0,1,\dots,e-1\}$, then
\begin{equation}
\label{eq5}
w({\bf c}(\beta))=\frac{(p^{\ell}-1)(q-1-e\eta_j)}{p^{\ell}N}=\frac{(p^{\ell}-1)(q-\eta^*_j)}{p^{\ell}N},
\end{equation}
where 
$$
{\bf c}(\beta)=({\mathop{\rm Tr}}_{\mathbb F_q/\mathbb F_{p^{\ell}}}(\beta),{\mathop{\rm Tr}}_{\mathbb F_q/\mathbb F_{p^{\ell}}}(\beta\theta),\dots,{\mathop{\rm Tr}}_{\mathbb F_q/\mathbb F_{p^{\ell}}}(\beta\theta^{((q-1)/N)-1}))
$$
(see \cite[Equation (12)]{DY}). Using this formula, the authors of \cite{DY} computed weight enumerators of $\EuScript{C}$ when $e=\gcd(N,(q-1)/(p^{\ell}-1))=1$ or 2 or 3 or 4; and when there exists an integer $v$ such that $p^v\equiv-1\pmod{e}$. They also noticed that the case $e\in\{5,6,8,12\}$ can be treated in a similar manner, however, the weight formulas will be very complicated; see \cite{TQXWY} for some results in this direction.

Assume now that $p\equiv 3$ or $5\pmod{8}$ and $\gcd(N,(q-1)/(p^{\ell}-1))=2^m$ with $m\ge 3$. We claim that $s/\ell$ is the multiplicative order of $p^{\ell}$ modulo $(q-1)/N$. Indeed, it follows from \cite[Proposition~1]{Beyl} that
$$
{\mathop{\rm ord}}_2\frac{q-1}{p^{\ell}-1}={\mathop{\rm ord}}_2\frac{p^s-1}{p^{\ell}-1}=\begin{cases}
\mathop{\rm ord}_2 s/\ell&\text{if $\ell$ is even,}\\
\mathop{\rm ord}_2 (p+1)+\mathop{\rm ord}_2 s-1&\text{if $\ell$ is odd.}
\end{cases}
$$
Therefore, 4 divides $s/\ell$. This implies that $(p^{\ell}-1)\mid(p^{s/2}-1)$ and $(p^{s/2}+1)/2$ is odd. Since
$$
\gcd\Bigl(N,\frac{p^{s/2}+1}2\cdot\frac{2(p^{s/2}-1)}{p^{\ell}-1}\Bigr)=2^m,\qquad m\ge 3,
$$
we have $\gcd(N,(p^{s/2}+1)/2)=1$. Thus $(p^{s/2}+1)/2$ divides $(q-1)/N$. Hence, if $\nu$ is the multiplicative order of $p$ modulo $(q-1)/N$, then $2\cdot\frac{p^{s/2}+1}2\mid (p^{\nu}-1)$, which yields  $\nu>s/2$. Since $\nu$ must divide $s$, we conclude that $\nu=s$, and so $s/\ell$  is the multiplicative order of $p^{\ell}$ modulo $(q-1)/N$. Combining the results given in Table~\ref{tab1} and Remark~\ref{r1} with \eqref{eq5}, we deduce the following theorem.

\begin{thm}
\label{t3}
Let $p\equiv 3\pmod{8}$, $N$ be a positive divisor of $q-1$ with $\gcd(N,(q-1)/(p^{\ell}-1))=2^m$, $m\ge 3$. Then $\EuScript{C}$ in \eqref{eq4} is an irreducible cyclic $[(q-1)/N,s/\ell]$ code over $\mathbb F_{p^{\ell}}$ with the weight enumerator
\begin{align*}
1&+\frac{q-1}4 X^{(p^{\ell}-1)(q-q^{\frac12})/p^{\ell}N}\\
&+\frac{q-1}4 X^{(p^{\ell}-1)(q-q^{\frac12}+4B_3q^{\frac14})/p^{\ell}N}+\frac{q-1}4 X^{(p^{\ell}-1)(q-q^{\frac12}-4B_3q^{\frac14})/p^{\ell}N}\\
&+\frac{q-1}8 X^{(p^{\ell}-1)(q+3q^{\frac12}+4A_3q^{\frac14})/p^{\ell}N}+\frac{q-1}8 X^{(p^{\ell}-1)(q+3q^{\frac12}-4A_3q^{\frac14})/p^{\ell}N}
\end{align*}
if $m=3$, and with the weight enumerator
\begin{align*}
1&+\frac{q-1}4 X^{(p^{\ell}-1)(q-q^{\frac12}+4B_3q^{\frac14})/p^{\ell}N}+\frac{q-1}4 X^{(p^{\ell}-1)(q-q^{\frac12}-4B_3q^{\frac14})/p^{\ell}N}\\
&+\frac{q-1}8 X^{(p^{\ell}-1)(q-q^{\frac12}+8B_4q^{\frac38})/p^{\ell}N}+\frac{q-1}8 X^{(p^{\ell}-1)(q-q^{\frac12}-8B_4q^{\frac38})/p^{\ell}N}\\
&+\sum_{t=2}^{m-3}\frac{q-1}{2^{t+2}}X^{(p^{\ell}-1)(q+3q^{\frac12}-\sum\limits_{r=3}^t 2^{r-1}A_rq^{\frac{2^{r-2}-1}{2^{r-1}}}+2^t A_{t+1}q^{\frac{2^{t-1}-1}{2^t}}+2^{t+2} B_{t+3}q^{\frac{2^{t+1}-1}{2^{t+2}}})/p^{\ell}N}\\
&+\sum_{t=2}^{m-3}\frac{q-1}{2^{t+2}}X^{(p^{\ell}-1)(q+3q^{\frac12}-\sum\limits_{r=3}^t 2^{r-1}A_rq^{\frac{2^{r-2}-1}{2^{r-1}}}+2^t A_{t+1}q^{\frac{2^{t-1}-1}{2^t}}-2^{t+2} B_{t+3}q^{\frac{2^{t+1}-1}{2^{t+2}}})/p^{\ell}N}\\
&+\frac{q-1}{2^{m-1}}X^{(p^{\ell}-1)(q+3q^{\frac 12}-\sum\limits_{r=3}^{m-2} 2^{r-1}A_rq^{\frac{2^{r-2}-1}{2^{r-1}}}+2^{m-2} A_{m-1}q^{\frac{2^{m-3}-1}{2^{m-2}}})/p^{\ell}N}\\
&+\frac{q-1}{2^m}X^{(p^{\ell}-1)(q+3q^{\frac 12}-\sum\limits_{r=3}^{m-1}2^{r-1}A_r q^{\frac{2^{r-2}-1}{2^{r-1}}}+2^{m-1}A_m q^{\frac{2^{m-2}-1}{2^{m-1}}})/p^{\ell}N}\\
&+\frac{q-1}{2^m}X^{(p^{\ell}-1)(q+3q^{\frac 12}-\sum\limits_{r=3}^{m-1}2^{r-1}A_r q^{\frac{2^{r-2}-1}{2^{r-1}}}-2^{m-1}A_m q^{\frac{2^{m-2}-1}{2^{m-1}}})/p^{\ell}N}
\end{align*}
if $m\ge 4$. The integers $A_r$ and $|B_r|$ are uniquely determined by~\eqref{eq2}.
\end{thm}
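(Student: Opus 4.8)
The plan is to obtain the two weight enumerators by combining the weight formula~\eqref{eq5} with the explicit list of reduced cyclotomic periods of order~$2^m$ supplied by Corollary~\ref{c1} (i.e.\ Table~\ref{tab1}), together with Remark~\ref{r1} in the boundary case $m=3$. No new identity is needed; the content is a careful transcription.

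First I would record that we are in the right regime. The discussion preceding the theorem already shows, via \cite[Proposition~1]{Beyl}, that $s/\ell$ is the multiplicative order of $p^{\ell}$ modulo $(q-1)/N$, so $\EuScript{C}$ in~\eqref{eq4} really is an irreducible cyclic $[(q-1)/N,s/\ell]$ code over $\mathbb F_{p^{\ell}}$ and is isomorphic to $\mathbb F_q$ via $\beta\mapsto{\bf c}(\beta)$. Since $2^m$ divides $(q-1)/(p^{\ell}-1)$, the two formulas for $\mathop{\rm ord}_2((q-1)/(p^{\ell}-1))$ quoted above (using that $p\equiv 3\pmod 8$ forces $\mathop{\rm ord}_2(p+1)=2$) give $2^{m-1}\mid s$ in both the $\ell$ even and the $\ell$ odd case. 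Consequently, for $m\ge 4$ the hypotheses of Table~\ref{tab1} are met, while for $m=3$ we have $4\mid s$, which is exactly the case treated in Remark~\ref{r1}; in either situation all the reduced periods $\eta_0^*,\dots,\eta_{2^m-1}^*$ of order $2^m$ for $\mathbb F_q$ are rational integers, with the values and multiplicities displayed there.

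Next I would translate. The zero codeword comes from $\beta=0$ and supplies the constant term $1$. If $\beta\ne 0$, say $\beta\in\gamma^j\EuScript{H}$, then~\eqref{eq5} gives $w({\bf c}(\beta))=(p^{\ell}-1)(q-\eta_j^*)/(p^{\ell}N)$, and each coset $\gamma^j\EuScript{H}$ has exactly $(q-1)/2^m$ elements; since $\beta\mapsto{\bf c}(\beta)$ is a bijection, a period value $\eta_j^*=v$ of multiplicity $\mu$ contributes precisely $\mu(q-1)/2^m$ codewords of weight $(p^{\ell}-1)(q-v)/(p^{\ell}N)$. Substituting the entries of Table~\ref{tab1} for $m\ge 4$ (respectively the roots of $P_8^*(X)$ from Remark~\ref{r1} for $m=3$) and simplifying the coefficients $\mu(q-1)/2^m$ — namely $2^{m-2}(q-1)/2^m=(q-1)/4$, $2^{m-3}(q-1)/2^m=(q-1)/8$, $2^{m-t-2}(q-1)/2^m=(q-1)/2^{t+2}$, $2(q-1)/2^m=(q-1)/2^{m-1}$, and $1\cdot(q-1)/2^m=(q-1)/2^m$ — reproduces the asserted enumerators. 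In particular, for $m\ge 4$ no period equals $q^{1/2}$, which is why the term $\tfrac{q-1}{4}X^{(p^{\ell}-1)(q-q^{1/2})/p^{\ell}N}$ appears only when $m=3$, coming from the double root $q^{1/2}$ of $P_8^*(X)$.

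The argument is essentially bookkeeping, so the only points that require genuine care are the deduction $2^{m-1}\mid s$ (it is precisely this that places us in the all-integer case of Corollary~\ref{c1}, and thus makes the weight enumerator have the stated shape) and the verification that the multiplicities in Table~\ref{tab1} — respectively the root multiplicities in Remark~\ref{r1} — sum to $2^m$, so that every codeword is counted once and only once. I would also remark, for completeness, that each exponent occurring is automatically a non-negative integer because it equals a Hamming weight by~\eqref{eq5}, and that if two of the displayed exponents should happen to coincide their coefficients are to be added, the stated expression being correct in any case.
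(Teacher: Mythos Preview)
Your proposal is correct and follows the same route as the paper, which simply says the theorem is obtained by combining Table~\ref{tab1} and Remark~\ref{r1} with~\eqref{eq5}; you spell out the bookkeeping and, in particular, explicitly verify that $2^{m-1}\mid s$ (so that Table~\ref{tab1} rather than Tables~\ref{tab2} or~\ref{tab3} applies), a point the paper leaves implicit.
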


In a similar manner, making use of Table~\ref{tab4} and equality \eqref{eq5}, we obtain the following result.

\begin{thm}
\label{t4}
Let $p\equiv 5\pmod{8}$, $N$ be a positive divisor of $q-1$ with $\gcd(N,(q-1)/(p^{\ell}-1))=2^m$, $m\ge 3$. Then $\EuScript{C}$ in \eqref{eq4} is an irreducible cyclic $[(q-1)/N,s/\ell]$ code over $\mathbb F_{p^{\ell}}$ with the weight enumerator
\begin{align*}
1&+\frac{q-1}4X^{(p^{\ell}-1)(q-q^{\frac12}+2D_2q^{\frac14})/p^{\ell}N}+\frac{q-1}4X^{(p^{\ell}-1)(q-q^{\frac12}-2D_2q^{\frac14})/p^{\ell}N}\\
&+\sum_{t=1}^{m-2}\frac{q-1}{2^{t+2}}X^{(p^{\ell}-1)(q+q^{\frac12}-\sum\limits_{r=2}^t 2^{r-1}C_rq^{\frac{2^{r-1}-1}{2^{r}}}+2^t C_{t+1}q^{\frac{2^{t}-1}{2^{t+1}}}+2^{t+1} D_{t+2}q^{\frac{2^{t+1}-1}{2^{t+2}}})/p^{\ell}N}
\end{align*}
\begin{align*}
&+\sum_{t=1}^{m-2}\frac{q-1}{2^{t+2}}X^{(p^{\ell}-1)(q+q^{\frac12}-\sum\limits_{r=2}^t 2^{r-1}C_rq^{\frac{2^{r-1}-1}{2^{r}}}+2^t C_{t+1}q^{\frac{2^{t}-1}{2^{t+1}}}-2^{t+1} D_{t+2}q^{\frac{2^{t+1}-1}{2^{t+2}}})/p^{\ell}N}\\
&+\frac{q-1}{2^m}X^{(p^{\ell}-1)(q+q^{\frac 12}-\sum\limits_{r=2}^{m-1}2^{r-1}C_r q^{\frac{2^{r-1}-1}{2^{r}}}+2^{m-1}C_m q^{\frac{2^{m-1}-1}{2^{m}}})/p^{\ell}N}\\
&+\frac{q-1}{2^m}X^{(p^{\ell}-1)(q+q^{\frac 12}-\sum\limits_{r=2}^{m-1}2^{r-1}C_r q^{\frac{2^{r-1}-1}{2^{r}}}-2^{m-1}C_m q^{\frac{2^{m-1}-1}{2^{m}}})/p^{\ell}N}.
\end{align*}
The integers $C_r$ and $|D_r|$ are uniquely determined by~\eqref{eq3}.
\end{thm}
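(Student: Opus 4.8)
The plan is to read the weight enumerator off directly from the values of the reduced cyclotomic periods of order $2^m$ tabulated in Table~\ref{tab4}, using the weight formula \eqref{eq5}. First I would confirm that the set-up is legitimate: the paragraph preceding Theorem~\ref{t3} already shows that $s/\ell$ is the multiplicative order of $p^{\ell}$ modulo $(q-1)/N$, so $\EuScript{C}$ in \eqref{eq4} really is an irreducible cyclic $[(q-1)/N,s/\ell]$ code, isomorphic to $\mathbb F_q$, and there $e:=\gcd(N,(q-1)/(p^{\ell}-1))=2^m$. I also need Table~\ref{tab4} to be applicable. Since $2^m\mid(q-1)/(p^{\ell}-1)$ we have ${\mathop{\rm ord}}_2\bigl((q-1)/(p^{\ell}-1)\bigr)\ge m$, and because ${\mathop{\rm ord}}_2(p+1)=1$ for $p\equiv 5\pmod 8$, the $2$-adic valuation formula quoted just before Theorem~\ref{t3} forces $2^m\mid s$ (so, a fortiori, $2^{m-1}\mid s$). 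Hence $C_m$ and $|D_m|$ are defined and Table~\ref{tab4} lists the complete multiset $\{\eta_0^*,\dots,\eta_{2^m-1}^*\}$.

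Next I would count codewords by weight. The map $\beta\mapsto{\bf c}(\beta)$ is a bijection $\mathbb F_q\to\EuScript{C}$, with $\beta=0$ giving the zero codeword (the leading $1$ in the weight enumerator). Every $\beta\in\mathbb F_q^*$ lies in a unique coset $\gamma^j\EuScript{H}$, $0\le j\le 2^m-1$, and each such coset has exactly $(q-1)/2^m$ elements; by \eqref{eq5} all of them yield codewords of Hamming weight $(p^{\ell}-1)(q-\eta_j^*)/(p^{\ell}N)$. Therefore, if a value $v$ is attained by $\eta_j^*$ for precisely $\mu$ indices $j$, then $\EuScript{C}$ contains exactly $\mu(q-1)/2^m$ codewords of weight $(p^{\ell}-1)(q-v)/(p^{\ell}N)$, i.e. the weight enumerator acquires the monomial $\tfrac{\mu(q-1)}{2^m}X^{(p^{\ell}-1)(q-v)/(p^{\ell}N)}$.

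It then remains to substitute the pairs $(v,\mu)$ from Table~\ref{tab4} and simplify. The pair $v=q^{\frac12}\pm 2D_2q^{\frac14}$, $\mu=2^{m-2}$, gives coefficient $(q-1)/4$; the values indexed by $t\in\{1,\dots,m-2\}$ have $\mu=2^{m-t-2}$, giving coefficient $(q-1)/2^{t+2}$; and the two values $-q^{\frac12}+\sum_{r=2}^{m-1}2^{r-1}C_rq^{(2^{r-1}-1)/2^r}\pm 2^{m-1}C_mq^{(2^{m-1}-1)/2^m}$ have $\mu=1$, giving coefficient $(q-1)/2^m$. In every case $q-v$ is obtained simply by adding $q$ and reversing the overall sign of the displayed expression, which reproduces exactly the exponents in the claimed weight enumerator. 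As a consistency check, the multiplicities sum to $2^{m-1}+2\sum_{t=1}^{m-2}2^{m-t-2}+2=2^m=e$, as they must.

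There is no serious obstacle here: the argument is essentially a translation of Table~\ref{tab4} through \eqref{eq5}. The two points that require care are the verification that $2^{m-1}\mid s$ (so that Table~\ref{tab4} genuinely applies and $C_m,|D_m|$ exist) and the bookkeeping that matches each multiplicity $\mu$ in Table~\ref{tab4} with the coefficient $\mu(q-1)/2^m$ and each period value $v$ with the exponent $(p^{\ell}-1)(q-v)/(p^{\ell}N)$. Whether some of the listed exponents happen to coincide for particular values of the parameters is irrelevant, since the weight enumerator is just the sum of these contributions.
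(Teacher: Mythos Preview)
Your proposal is correct and follows exactly the route the paper indicates: combine Table~\ref{tab4} with the weight formula~\eqref{eq5}, after observing from the $2$-adic valuation argument preceding Theorem~\ref{t3} that $2^{m-1}\mid s$ so that Table~\ref{tab4} applies and $C_m,|D_m|$ are defined. Your write-up is simply a more explicit version of what the paper does in one sentence; the only cosmetic point is that the valuation formula has two cases ($\ell$ even and $\ell$ odd), but in both one obtains $2^m\mid s$, so your conclusion stands.
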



\begin{thebibliography}{00}
\bibitem{B}
I.~N.~Baoulina, On a class of diagonal equations over finite fields, Finite Fields Appl. 40~(2016) 201--223.

\bibitem{BM}
L.~D.~Baumert, J.~Mykkeltveit, Weight distributions of some irreducible cyclic codes, DSN Progr. Rep.  16~(1973) 128--131.

\bibitem{BEW}
B.~C.~Berndt, R.~J.~Evans, K.~S.~Williams, Gauss and Jacobi Sums, Wiley-Interscience, New York, 1998.

\bibitem{Beyl}
 R.~F.~Beyl, Cyclic subgroups of the prime residue group, Amer. Math. Monthly 84~(1977) 46--48.

\bibitem{DY}
C.~Ding, J.~Yang, Hamming weights in irreducible cyclic codes, Discrete Math. 313~(2013) 434--446.

\bibitem{G2}
S.~Gurak, Period polynomials for $\mathbb F_{p^2}$ of fixed small degree, in: Finite Fields and Applications, Springer Berlin Heidelberg, 2000, pp.~196--207.

\bibitem{G3}
S.~J.~Gurak, Period polynomials for $\mathbb F_q$ of fixed small degree, in: Number Theory, in: CRM
Proc. and Lect. Notes, vol.~36, American Mathematical Society, 2004, pp.~127--145.

\bibitem{H}
A.~Hoshi, Explicit lifts of quintic Jacobi sums and period polynomials for ${\bf F}_q$, Proc. Japan Acad., Ser.~A 82~(2006) 87--92.

\bibitem{M}
G.~Myerson, Period polynomials and Gauss sums for finite fields, Acta Arith. 39~(1981) 251--264.

\bibitem{TQXWY}
C.~Tang, Y.~Qi, M.~Xu, B.~Wang, Y.~Yang, A note on weight distributions of
irreducible cyclic codes, preprint (2012), arXiv:1202.3514.

\bibitem{W1}
J.~Wolfmann, The number of solutions of certain diagonal equations over finite fields, J.~Number Theory 42~(1992) 247--257.

\bibitem{W2}
J.~Wolfmann, New results on diagonal equations over finite fields from cyclic codes,  in: Finite Fields: Theory, Applications, and Algorithms, in: Contemp. Math., vol.~168, American Mathematical Society, 1994, pp.~387--395.


\end{thebibliography}
\end{document}